\documentclass[12pt]{amsart}
\usepackage{amsfonts,latexsym,rawfonts,amsmath,amssymb,amsthm}
\usepackage[plainpages=false]{hyperref}
\usepackage{esint}
\usepackage{tikz}
\usetikzlibrary{calc, arrows.meta}
\usepackage{float}
\usepackage{microtype}
\usepackage{cite}
\usepackage{tikz}
\usepackage{mathdots}
\usepackage{yhmath}
\usepackage{cancel}
\usepackage{siunitx}
\usepackage{multirow}
\usepackage{tabularx}
\usepackage{extarrows}
\usepackage{booktabs}
\usetikzlibrary{fadings}
\usetikzlibrary{patterns}
\usetikzlibrary{shadows.blur}
\usetikzlibrary{shapes}

\usepackage{graphicx}
\RequirePackage{color}

\numberwithin{equation}{section}

\newcommand{\mean}[1]{-\hskip-1.08em\int_{#1}}

\newtheorem{prop}{Proposition}[section]
\newtheorem{thm}[prop]{Theorem}
\newtheorem{lem}[prop]{Lemma}

\newtheorem{cor}[prop]{Corollary}
\newtheorem{rem}[prop]{Remark}

\newtheorem{defi}[prop]{Definition}

\newtheorem{ques}[prop]{Question}

\renewcommand{\det}{\operatorname{det}}

\allowdisplaybreaks
\title[Gradient potential for LMA equations]{Gradient potential estimates for linearized Monge-Amp\`ere equations}

\author{Guoqing Cui}

\date{\today}

\address{School of Mathematical Sciences, Peking University, Beijing 100871, China}
\email{gqcui25@stu.pku.edu.cn}

\thanks {This research is partially supported by 2023YFA009900 and NSFC  Grant 12271008.}

\begin{document}
	
	\subjclass{35B65, 35J96, 31C45.}
	
	\keywords{Gradient potential estimates, borderline-type regularity, linearized Monge-Amp\`ere equations}
	
	\begin{abstract}
    This paper establishes a pointwise gradient potential estimate for solutions to linearized Monge-Amp\`ere equations and derives a modulus of continuity estimate for the gradient in terms of the associated section adapted potential. As applications, we obtain a sufficient criterion for the continuity of the gradient, local BMO and VMO estimates for the gradient with respect to Monge-Amp\`ere sections, and a local $C^{1,\gamma}$ regularity criterion.
	\end{abstract}
	
	\maketitle
	
	\section{Introduction}
 In this paper, we investigate local gradient estimates for solutions of the linearized Monge-Amp\`ere equation 
 \begin{equation}\label{eq: LMA equation}
		\Phi^{ij}D_{ij}u=f\quad \text{in }\Omega,
	\end{equation} 
where $\phi\in C^2(\Omega)$ is strictly convex in a convex domain $\Omega\subset\mathbb{R}^n$, \[\Phi:=(\Phi^{ij})=\det D^2\phi (D^2\phi)^{-1}\] is the cofactor matrix of $D^2\phi$  satisfying 
\begin{equation}\label{condition: bdd of det}
		0<\lambda\le \det D^2\phi\le \Lambda\quad \text{in }\Omega,	\end{equation} 
for some positive constants $\lambda$ and $\Lambda$. 

The linearized Monge-Amp\`ere equation \eqref{eq: LMA equation} plays an important role in many problems. For example, it appears in a class of fourth-order equations of Monge-Amp\`ere type, including the affine mean curvature equation in affine differential geometry \cite{T, TW1, TW2, TW3, TW4} and Abreu's equation arising from complex geometry \cite{D1, D2, D3, D4, Z1, Z2}. In addition, the regularity of the dual semigeostrophic equation for atmospheric modeling is also related to the linearized Monge-Amp\`ere equation \cite{Lo, Le2}. A new application has recently been found in the study of convex functionals with a convexity constraint arising in the Rochet-Choné model for the monopolist problem in economics \cite{Le4, LZ, KLWZ}, which involves singular Abreu's equations. 

Under assumption \eqref{condition: bdd of det}, the linearized Monge-Amp\`ere equation \eqref{eq: LMA equation} fails to be uniformly elliptic in general, making the analysis challenging.
The first breakthrough in the regularity theory for the equation \eqref{eq: LMA equation} was made by Caffarelli and Guti\'errez. In \cite{CG}, they established the Harnack inequality for the homogeneous linearized Monge-Amp\`ere equation, which yields the interior H\"older continuity of solutions. Since then, many further developments have been obtained. H\"older estimates for the non-homogeneous equation have been extensively studied in \cite{Lo, LS, LN1, Le1, Le2, Le3, Wa, Ki, CWZ, GL}. In particular, when the right-hand side is a signed Radon measure, a potential estimate and interior H\"older estimates were obtained recently in \cite{CWZ, GL}.
	

Concerning gradient regularity, assuming \eqref{condition: bdd of det} together with the continuity of $\det D^2\phi$, Guti\'errez and Nguyen \cite{GN1} established the Cordes-Nirenberg type interior $C^{1,\alpha}$ estimates for solutions of the linearized Monge-Amp\`ere equation with  $f\in L^{q}(\Omega)$, $q>n$. When $f\in L^q(\Omega)$ with $n/2<q\le n$, the interior and global $W^{1,p}$ estimates were established in \cite{LN1}, namely, $u\in W^{1,p}(\Omega)$ for all $p<\frac{nq}{n-q}$. In the borderline case, $Du$ may fail to be continuous. Indeed, 
even in the model case $\phi=\frac{1}{2}|x|^2$, namely for the Poisson equation, $Du$ may be unbounded \cite{Ci}. 
In this paper, we establish the gradient potential estimate.


Recall that for the Poisson equation $\Delta u=f$, we have the well-known 
gradient potential estimate 
\[|Du(x)|\le C\mathbf{I}_1^{f}(x,r)+C\mean{B(x,r)}|Du|\,\mathrm{d}y\]
 for a.e. $x$, where $\mathbf{I}_1^{f}(x,r)$ 
 is the truncated Riesz potential of $f$ defined by 
 \[\mathbf{I}_1^f(x,r)=\int_0^r\frac{|\mu|(B(x,\rho))}{\rho^{n-1}}\,\frac{\mathrm{d}\rho}{\rho}=\int_0^r\mean{B(x,\rho)}|f(y)|\,\mathrm{d}y\,\mathrm{d}\rho,\] 
with $\mu:=f\,\mathrm{d}x$ and  \[\mean{B(x,r)}|Du|\,\mathrm{d}y:=\frac{1}{|B(x,r)|}\int_{B(x,r)}|Du|\,\mathrm{d}y.\]
 For fully nonlinear uniform elliptic equations 
 \[F(x,D^2u)=f,\] 
 the following gradient potential estimate was established in \cite{DKM}:
 \[|Du(x)|\le C\mathbf{I}_p^f(x,r)+C\left(\mean{B(x,r)}|Du|^q\,\mathrm{d}x\right)^{1/q}\]
 for a.e. $x$, all $p>n_E$ and $q>n$, where $n_E$ is a constant depending only on $n$ and the ellipticity of $F$, and $\mathbf{I}_p^f(x,r)$ is the $L^p$ version of the classical Riesz potential defined by \[\mathbf{I}_p^f(x,r):=\int_0^r\left(\mean{B(x,\rho)}|f(y)|^p\,\mathrm{d}y\right)^{1/p}\,\mathrm{d}\rho\ge \mathbf{I}_1^f(x,r).\]
 
Because of the hidden nonlinear structure, degeneracy and affine invariance of the linearized Monge-Amp\`ere equation, we use the section-adapted modified Riesz potential (the $L^q$ version of the classical Riesz potential) defined by 
\begin{equation}\label{eq: Riesz potential}
		\mathbf{I}_{\phi, q}^{f,\alpha}(x_0,h):=h^{\alpha-\frac{1}{2}}\int_0^{h}\left(\mean{S_{\phi}(x_0,\rho)}|f(y)|^q\,\mathrm{d}y\right)^{1/q}\frac{\mathrm{d\rho}}{\rho^{\alpha}},
	\end{equation}
	for any $q\ge 1$, $\alpha\ge \frac{1}{2}$, where \[S_{\phi}(x_0,h):=\{x\in\Omega: \phi(x)<\phi(x_0)+D\phi(x_0)\cdot (x-x_0)+h\}\] is a section of $\phi\in C^2(\Omega)$ centered at $x_0\in\Omega$ with height $h>0$. Under the assumption \eqref{condition: bdd of det}, sections are equivalent to Euclidean balls in terms of their geometric and measure-theoretic properties; see \cite{Le5} for an overview of these properties. 
 
For convenience, we denote 
\[\det D^2\phi:=g\in C(\Omega).\] 
We do not address the question of optimal regularity assumptions on $g$. Throughout the paper, we assume that $\phi$ is smooth. Nevertheless, all estimates depend only on the structural constants appearing in the assumptions and are independent of this auxiliary smoothness assumption.
We have the following pointwise gradient potential estimate:
	\begin{thm}[Gradient potential estimate]\label{thm: gradient potential est}
		Let $\phi\in C^2(\Omega)$ be a strictly convex function on $\Omega$ satisfying \begin{equation}\label{eq: continuous g}
			0<\lambda\le g\le\Lambda\,\,\text{in }\Omega,\quad\text{and}\quad g\in C(\Omega).
		\end{equation} Let $f\in L^q(\Omega)$ with  $\frac{n}{2}<q\le n$, and let $u$ solve \eqref{eq: LMA equation}. Choose $p$ so that \[n<p<\frac{nq}{n-q}\quad \text{if  }q<n,\quad\quad\text{and }\quad\quad  n<p<\infty\quad \text{if  }q=n.\] 
		For any $\theta\in(0,1)$ and $\alpha\in(1/2,1)$, for any Lebesgue point $x_0\in \Omega'$ of $Du$ with $\Omega'\Subset\Omega$, there exist $C,  h_{\ast}>0$, depending only on $n$, $\alpha$, $\theta$, $\lambda$, $\Lambda$, $p$, $q$, $\operatorname{dist}(\Omega',\partial\Omega)$ and the modulus of continuity of $g$, such that whenever $0<h\le h_{\ast}$ and $S_{\phi}(x_0,ch^{\theta})\Subset\Omega$, one has 
		\begin{eqnarray}
		|Du(x_0)|&\le& C\mean{S_{\phi}(x_0,h)}|Du(x)|\,\mathrm{d}x+ C\left(\mean{S_{\phi}(x_0,h^{\theta})}|Du(x)|^p\,\mathrm{d}x\right)^{1/p}\nonumber\\[5pt]
		&&+ C \mathbf{I}_{\phi,q}^{f,\alpha}(x_0,ch^{\theta}),\label{eq: gredient potential est}
		\end{eqnarray} where $c>0$ is a constant depending only on $n$, $\theta$, $\lambda$, $\Lambda$,  $\operatorname{dist}(\Omega',\partial \Omega)$ and the modulus of continuity of $g$.
	\end{thm}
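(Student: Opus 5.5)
The plan is a Campanato-type iteration on a decreasing sequence of sections centered at $x_0$, comparing $u$ with solutions of homogeneous equations with frozen, normalized coefficients. Fix $h_j:=\tau^j h$ for a small $\tau\in(0,1)$ to be chosen. On each $S_\phi(x_0,h_j)$ I normalize by the affine map $T_j$ that brings the section to one comparable to $B_1$ (John's lemma), with $\tilde\phi_j(y)=h_j^{-1}\big(\phi(T_j^{-1}y)-\ell_{x_0}(T_j^{-1}y)\big)$ and $\tilde u_j(y)=h_j^{-1/2}\,u(T_j^{-1}y)$ suitably rescaled. This is affine invariance: $\tilde u_j$ solves a linearized Monge--Amp\`ere equation with the same $\lambda,\Lambda$, and its right-hand side has $L^q$ norm controlled by $h_j^{1/2}\big(\mean{S_\phi(x_0,h_j)}|f|^q\big)^{1/q}$, using $|S_\phi(x_0,h)|\sim h^{n/2}$ and $\det T_j\sim h_j^{-n/2}$.

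\textbf{Step 1: one-step comparison.} Let $v$ solve $\tilde\Phi^{ij}D_{ij}v=0$ in the normalized section with $v=\tilde u$ on the boundary, and let $w$ solve the frozen problem $\det D^2 w = g(x_0)$ with $w=\tilde\phi$ on the boundary, as in Guti\'errez--Nguyen. Continuity of $g$ gives $w$ smooth with $\|\tilde\phi-w\|$ small, say $\le\omega(h_j)$. Combining the Guti\'errez--Nguyen $C^{1,\beta}$ estimate for homogeneous equations with the Loeper--Nguyen / Le--Nguyen $W^{1,p}$ estimates for $f\in L^q$ with $p<nq/(n-q)$, I aim for an excess decay of the form
\[
E(\tau h_j)\le C\tau^{\beta}E(h_j)+C\,h_j^{1/2}\Big(\mean{S_\phi(x_0,h_j)}|f|^q\Big)^{1/q}+(\text{error in }\omega),
\]
where $E(h)=\mean{S_\phi(x_0,h)}|Du-(Du)_{S_\phi(x_0,h)}|\,dx$, measured in the unnormalized gradient. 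The gradient in normalized variables is $h^{-1/2}T_j^{-t}Du$, and here $T_j$ is not close to a scaled rotation. This is the point where the $L^p$ term on the larger section $S_\phi(x_0,h^\theta)$ enters. By Caffarelli's $C^{1,\gamma}$ (or $W^{2,p}$ / $C^{1,1-\epsilon}$ when $g$ is continuous) estimates for $\phi$, the eccentricity of $S_\phi(x_0,h)$ is bounded by $C h^{-\epsilon}$ for any $\epsilon>0$. So the $T_j$ distort gradients by at most polynomially small powers, and these are absorbed by choosing $\alpha>1/2$ and relating to height $h^\theta$: $\|T_j\|\|T_j^{-1}\|\le C(h_j/h^\theta)^{-\epsilon}$-type bounds after renormalizing relative to the section of height $h^\theta$. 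In that section the $L^p$ norm of $Du$ controls the comparison error through H\"older's inequality with exponent $p>n$.

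\textbf{Step 2: summation.} I then iterate. Writing $a_j=(Du)_{S_\phi(x_0,h_j)}$, I have $|a_{j+1}-a_j|\le C E(h_j)$. Summing the decay inequality gives
\[
\sum_j E(h_j)\le C E(h)+C\sum_j h_j^{1/2}\Big(\mean{S_\phi(x_0,h_j)}|f|^q\Big)^{1/q}\cdot(\text{distortion }(h^\theta/h_j)^{\alpha-1/2}),
\]
and the sum is compared with the integral $\mathbf{I}^{f,\alpha}_{\phi,q}(x_0,ch^\theta)$ via the engulfing property $S_\phi(x_0,h_{j+1})\subset S_\phi(x_0,\rho)$ for $\rho\in[h_{j+1},h_j]$. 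The factor $h^{\alpha-1/2}\rho^{-\alpha}$ in the definition is exactly what absorbs the $h_j^{1/2}$ together with the eccentricity loss $h_j^{-(\alpha-1/2)}$. Since $x_0$ is a Lebesgue point, $a_j\to Du(x_0)$, so $|Du(x_0)|\le |a_0|+\sum_j|a_{j+1}-a_j|$, and $|a_0|\le\mean{S_\phi(x_0,h)}|Du|$. The constants $h_*$ and $c$ come from the requirement that the modulus of continuity of $g$ is small enough at scale $ch^\theta$ for the Caffarelli and perturbation estimates.

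\textbf{Main obstacle.} The hardest step is the gradient comparison under the non-isotropic normalizations $T_j$. The linearized equation is affine invariant, but $|Du|$ is not, so the decay of excess must be transported through matrices with growing condition number. I would first try to control $\|T_j\|\|T_j^{-1}\|$ by $C(h^\theta/h_j)^{\delta}$ with $\delta$ small, using the $C^{1,1-\epsilon}$ regularity of $\phi$ for continuous $g$. I would then choose $\tau$ so that $C\tau^\beta\tau^{-\delta}<1/2$, leaving the polynomial loss to be paid by the weight $\rho^{-\alpha}$ with $\alpha>1/2$ and by the $L^p$ average on $S_\phi(x_0,h^\theta)$.
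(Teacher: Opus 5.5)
Your overall architecture matches the paper's: affine normalization of sections, comparison with a homogeneous solution, excess decay, and telescoping of section averages down to the Lebesgue point. Your idea of getting the anisotropic distortion bound $\|A_j\|+\|A_j^{-1}\|\lesssim h_j^{-\epsilon}$ from the $C^{1,1-\epsilon}$ geometry of sections is a legitimate substitute for the paper's matrix sequence built from Lemmas~\ref{lem: geo of sections}--\ref{lem: geo-2}. That sequence gives $\|A_j\|+\|A_j^{-1}\|\le CL^j$ with $L=\exp(2C\sqrt{\delta}/\sigma)$ close to $1$, which is the same small polynomial loss.

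The gap is in how that loss is paid. The comparison step only contracts the \emph{normalized} excess $\mathcal E_j$ of $Du\,A_j^{-1}$. It does not give a one-step decay $E(\tau h_j)\le C\tau^\beta E(h_j)+\dots$ for the Euclidean excess with $C$ independent of $j$, because converting between the two costs $\|A_j\|\,\|A_j^{-1}\|$, which is not bounded uniformly in $j$. (If that inequality held, summing it from height $h$ would already give the case $\theta=1$, which the paper leaves open in Question~\ref{ques:endpoint}.) For the same reason your bound $\sum_j E(h_j)\le CE(h)+\dots$, with the iteration started at height $h$, fails. You pay $\|A_k^{-1}\|\sim L^k$ passing from $E(h)$ to $\mathcal E_k$ and $\|A_i\|\sim L^i$ passing back, and nothing compensates. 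Your relative bound $C(h^\theta/h_j)^{\delta}$ also ignores the eccentricity of $S_\phi(x_0,h^\theta)$ itself in the coordinates where $|Du|$ is measured.

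The missing idea concerns the normalized recursion $\mathcal E_{j+1}\le\rho\,\mathcal E_j+C\sqrt{h_j}F_j$. Here consecutive normalizations are compatible, and $\|A_jA_{j+1}^{-1}\|\le c_1^{-1/2}$ is absorbed into $\rho=c_0/\sqrt{c_1}$. This recursion must be started at level $N=\lfloor\theta k\rfloor$, i.e. at height $\approx h^\theta$, where $h_k<h\le h_{k-1}$. Then $E_i\le CL^{i+N}\rho^{i-N}E_N+\dots$, and the $(1-\theta)k$ contraction steps between heights $h^\theta$ and $h$ beat the distortion at both ends:
\[
L^{N+k}\rho^{k-N}\le C(L^{1+\theta}\rho^{1-\theta})^k\le C.
\]
This is how $S_\phi(x_0,h^\theta)$ enters: as the initial excess $E_N$, bounded by the $L^p$ average of $Du$ over that section. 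It does not enter through a H\"older bound on a comparison error.

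The order of constants is essential. First fix $c_0$, which fixes $\sigma$. Then take $\delta$ so small that $L^{1+\theta}\rho^{1-\theta}<1$, $L\rho<1$ and $L\sigma^{(1-\theta)(\alpha-1/2)}\le 1$. The last condition is what lets the weight with $\alpha>\frac12$ absorb the factor $L^{j}$ in the $f$-terms, via $L^{j+1}h_j^{1/2}\le Ch_N^{\alpha-1/2}h_j^{1-\alpha}$.

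Finally, the iterated quantity should be the $L^p$ excess $E_p$ with $p>n$, not the $L^1$ excess. The comparison in Lemma~\ref{lem: decay-est} needs $\|u-\ell\|_{L^\infty(S_1)}\le CE_p(S_1)$ (Morrey) to invoke the approximation lemma and the $W^{1,p}$ estimate. With the $L^1$ excess you have no such $L^\infty$ control without adding a separate local maximum principle.
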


       \begin{rem} We give several remarks on the modified Riesz potential \eqref{eq: Riesz potential}:
        \begin{enumerate}
            \item [(1)] When $\phi=\frac{1}{2}|x|^2$ and $\alpha=\frac{1}{2}$, we have 
	\begin{align*}
		\mathbf{I}_{\phi, q}^{f,\alpha}(x_0,h)=\sqrt{2}\mathbf{I}_q^f(x_0,\sqrt{2h}),
	\end{align*} where $\mathbf{I}_{q}^f(x_0,h)$ is the usual $L^q$-version Riesz potential defined in \cite{DKM}.
    \item [(2)] We can also define the section-adapted Wolff potential as \begin{equation}\label{eq: Wolff potential}
        W_{\phi,\beta,q}^{\mu}(x,h):=\int_0^h\left(\frac{|\mu|(S_{\phi}(x,\rho))}{\rho^{\frac{1}{2} \left(n-\beta q\right)}}\right)^{\frac{1}{q-1}}\frac{\mathrm{d}\rho}{\rho},
    \end{equation} for $0<\beta\le n/q$. When $\phi=\frac{1}{2}|x|^2$, we can derive \[W_{\phi,\beta,q}^{\mu}(x,h)=C_{n,\beta, q}W_{\beta,q}^{\mu}(x,\sqrt{2h}),\] where $W_{\beta,q}^{\mu}(x,r)$ is the usual Wolff potential.
    Then we can write \eqref{eq: Riesz potential} as \begin{equation}\label{eq: use W to I}
        I_{\phi,q}^{f,\alpha}(x_0,h)\approx h^{\alpha-\frac{1}{2}}W^{|f|^q}_{\phi,\beta,q+1}(x_0,h)\qquad \beta:=\frac{2(1-\alpha) q}{q+1}.
    \end{equation}
        \end{enumerate}
    \end{rem}

    \begin{rem}
       We give several remarks on the gradient potential estimate \eqref{eq: gredient potential est}:
       \begin{enumerate}
           \item   The modified Riesz potential $\mathbf{I}_{\phi,q}^{f,\alpha}(x_0,h)$ can be seen as an integrated version of the quantity $N_{\phi,f,q}(x_0)$ defined in \cite{LN1} by \[N_{\phi,f,q}(x_0):=\sup_{h\le h_{0}} h^{\frac{1-\varepsilon}{2}}\left(\mean{S_{\phi}(x_0,h)}|f(x)|^q\,\mathrm{d}x\right)^{1/q}\] for $\varepsilon\in (0,1)$ and $h_0>0$. Let $\alpha\in (\frac{1}{2}, \frac{1+\varepsilon}{2})$, we derive \begin{align*}
            \mathbf{I}_{\phi,q}^{f,\alpha}(x_0,h_0)&=h_0^{\alpha-\frac{1}{2}}\int_0^{h_0}\left(\mean{S_{\phi}(x_0,\rho)}|f(x)|^q\,\mathrm{d}x\right)^{1/q}\frac{\mathrm{d}\rho}{\rho^{\alpha}}\\
            &\le \left(\sup_{\rho\le h_0}\rho^{\frac{1-\varepsilon}{2}}\left(\mean{S_{\phi}(x_0,\rho)}|f(x)|^q\,\mathrm{d}x\right)^{1/q}\right) h_0^{\alpha-\frac{1}{2}}\int_0^{h_0}\rho^{-\frac{1-\varepsilon}{2}}\frac{\mathrm{d}\rho}{\rho^{\alpha}}\\
            &\le C_{\varepsilon,\alpha}h_0^{\varepsilon/2}N_{\phi,f,q}(x_0).
        \end{align*}
       Consequently, after estimating the local averages of \(Du\) in
      Theorem~\ref{thm: gradient potential est} by the standard local \(W^{1,p}\) estimate, one recovers
     a pointwise gradient bound of the same form as that in \cite{LN1},
     namely \[ |Du(x_0)| \le C\bigl(\|u\|_{L^\infty(\Omega)}+N_{\phi,f,q}(x_0)\bigr).\]
     Therefore Theorem~\ref{thm: gradient potential est} should be viewed as complementary to the pointwise estimate of \cite{LN1} in a different form.
     \item The gradient potential estimate \eqref{eq: gredient potential est} can yield directly the following gradient estimate in Lorentz spaces (see Definition \ref{defn: Lorentz and Morrey space}):  \begin{equation}\label{eq: Lorentz est}
            \Vert Du\Vert_{L(r,\gamma)(\Omega')}\le C\left(\Vert u\Vert_{L^{\infty}(\Omega)}+\Vert f\Vert_{L(s,\gamma)(\Omega)}\right),
        \end{equation} for any $s\in (n/2,n]$, $r\in (n, ns/(n-s))$ and $\gamma\in (0,\infty]$.  The gradient estimate \eqref{eq: Lorentz est} can be viewed as an analogue of the classical $W^{1,p}$ estimates of \cite{LN1} in Lorentz spaces. One way to justify \eqref{eq: Lorentz est} is to combine \eqref{eq: gredient potential est}, \eqref{eq: use W to I}, the Havin-Mazya type inequality \[W_{\phi,\beta,q}^{\mu}(x,h)\le C\mathbf{I}_{\phi,\beta}\left[(\mathbf{I}_{\phi,\beta}(|\mu|))^{\frac{1}{q-1}}\right](x,Ch),\] and the Lorentz estimate for $\mathbf{I}_{\phi,\beta}$ \[\Vert \mathbf{I}_{\phi,\beta}(f)(\cdot, h_0)\Vert_{L(\frac{ns}{n-\beta s},\gamma)(\Omega')}\le C\Vert f\Vert_{L(s,\gamma)(\Omega)}.\] 
        Equivalently, one may obtain \eqref{eq: Lorentz est} more directly from the $W^{1,p}$-estimate of \cite{LN1} together with standard Lorentz space properties. Therefore, \eqref{eq: Lorentz est} is recorded here only as a consequence and not as the main new contribution.
       \end{enumerate}
    \end{rem}

The potential estimate also yields a modulus of continuity for $Du$:
\begin{thm}[Gradient continuity via potential]\label{thm: grad-conti}
		Under the assumptions of Theorem \ref{thm: gradient potential est}, assume that for every $\Omega'\Subset\Omega$, \begin{equation}
			\lim\limits_{h\to 0} \sup_{\substack{x\in\Omega',\\S_{\phi}(x,h)\Subset\Omega}}\mathbf{I}_{\phi,q}^{f,\alpha}(x,h)=0.
		\end{equation} Then $u\in C_{\text{loc}}^1(\Omega)$.  Moreover, for every $\Omega'\Subset\Omega''\Subset\Omega$ and every $\theta\in (0,1)$,  we have \begin{equation} 
		|Du(x_1)-Du(x_2)|\le C\Vert Du\Vert_{L^{\infty}(\Omega'')}|x_1-x_2|^{\kappa\beta} +C\max_{x\in \{x_1,x_2\}}\mathbf{I}_{\phi,q}^{f,\alpha}(x,c|x_1-x_2|^{\theta\beta}) 
		\end{equation} for all $x_1,x_2\in\Omega'$, where $C>0$ depends only on $n$, $\alpha$, $p$, $q$, $\theta$, $\lambda$, $\Lambda$, $\Omega'$, $\Omega''$ and the modulus of continuity of $g$, $c>0$ depends only on $n$, $p$, $q$, $\theta$, $\lambda$, $\Lambda$, $\Omega'$ and the modulus of continuity of $g$, $\kappa>0$ depends only on $n$, $p$, $q$, $\alpha$ and $\theta$, and $\beta>1$ depends only on $n$, $\lambda$, $\Lambda$, $p$, $q$ and $\Omega'$. Here $\beta$ comes from the local $C^{1,\beta-1}$ regularity of the Monge-Amp\`ere potential $\phi$.
\end{thm}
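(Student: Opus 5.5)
The plan is to prove Theorem \ref{thm: grad-conti} by running the iteration scheme behind Theorem \ref{thm: gradient potential est} for \emph{differences} $Du-\ell$ with $\ell$ a constant vector, rather than for $Du$ itself. Two facts are used throughout. First, sections are comparable to balls: since $\phi\in C^{1,\beta-1}$ and is strictly convex with the engulfing property on $\Omega''$, there are constants with $B(x,c_1h^{1/\beta})\subset S_\phi(x,h)\subset B(x,C_1h^{\mu})$ for some $\mu>0$. This is where $\beta$ enters, and it is why the final estimate involves $|x_1-x_2|^{\beta}$: a section of height $h\approx r^\beta$, with $r=|x_1-x_2|$, contains a ball of radius comparable to $r$. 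Second, the estimate is applied to $u-\ell\cdot x$ with $\ell$ constant. This is legitimate because $\Phi^{ij}D_{ij}(\ell\cdot x)=0$, so $u-\ell\cdot x$ still solves \eqref{eq: LMA equation} with the same $f$.

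\textbf{Step 1: excess decay.} The core of the proof of Theorem \ref{thm: gradient potential est} should be a one-step comparison. On $S_\phi(x_0,h)$, compare $u$ with the solution $w$ of the homogeneous equation having the same boundary data. Then use the Guti\'errez--Nguyen $C^{1,\gamma}$ estimate for $w$, valid because $g$ is continuous, together with the $W^{1,p}$ comparison estimate of \cite{LN1}. I would first extract from that proof a decay inequality of the form
\[
E(x_0,\tau h)\le C\tau^{\gamma_0}E(x_0,h)+C h^{\frac12}\Bigl(\mean{S_\phi(x_0,h)}|f|^q\Bigr)^{1/q},
\qquad
E(x_0,h):=\mean{S_\phi(x_0,h)}|Du-(Du)_{S_\phi(x_0,h)}|\,\mathrm{d}x .
\]
Iterating along $h_k=\tau^k h$ and summing the $f$-terms gives a Riesz-type sum bounded by $\mathbf{I}_{\phi,q}^{f,\alpha}(x_0,ch^\theta)$. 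The $L^p$ average on $S_\phi(x_0,h^\theta)$ and the factor $h^{\alpha-1/2}$ absorb the losses, exactly as in the proof of Theorem \ref{thm: gradient potential est}. The output is:
\begin{enumerate}
\item[(i)] the averages $(Du)_{S_\phi(x_0,h_k)}$ form a Cauchy sequence;
\item[(ii)] the limit is $Du(x_0)$ at Lebesgue points;
\item[(iii)] the quantitative bound
\[
|Du(x_0)-(Du)_{S_\phi(x_0,h)}|\le C h^{\kappa}\|Du\|_{L^\infty(\Omega'')}+C\mathbf{I}_{\phi,q}^{f,\alpha}(x_0,ch^\theta).
\]
\end{enumerate}
Here the factor $h^\kappa$ comes from bounding the initial excess at scale $h^\theta$ by $\|Du\|_{L^\infty}$ and propagating it down to scale $h$ with decay rate $\tau^{\gamma_0}$. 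Boundedness of $Du$ on $\Omega''$ follows from Theorem \ref{thm: gradient potential est} combined with the hypothesis and the $W^{1,p}$ estimate. Moreover, under the vanishing-potential hypothesis the convergence in (i) is uniform in $x_0\in\Omega'$, so $Du$ is a uniform limit of continuous averages, and hence $u\in C^1_{\rm loc}$.

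\textbf{Step 2: comparing two points.} Let $r=|x_1-x_2|$ be small and set $h=Cr^{\beta}$, so that $S_\phi(x_1,h)$ and $S_\phi(x_2,h)$ both contain a common ball of radius $\sim r$ and are both contained in $S_\phi(x_1,Kh)$, using the engulfing property. Then
\[
|(Du)_{S_\phi(x_1,h)}-(Du)_{S_\phi(x_2,h)}|\le C\,E(x_1,Kh),
\]
because both sections have measure comparable to that of $S_\phi(x_1,Kh)$. By Step 1, $E(x_1,Kh)$ is bounded by the same right-hand side as in (iii). Combining this with (iii) at $x_1$ and at $x_2$ gives
\[
|Du(x_1)-Du(x_2)|\le C\|Du\|_{L^\infty}\,r^{\kappa\beta}+C\max_{i}\mathbf{I}_{\phi,q}^{f,\alpha}(x_i,c\,r^{\theta\beta}).
\]
Large $r$ is handled trivially by the $L^\infty$ bound.

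\textbf{Main obstacle.} The hard part is Step 1: making the excess-decay version quantitative. We need the power $h^\kappa$ with $\kappa$ depending only on $n,p,q,\alpha,\theta$, and we need the $L^p$ term at scale $h^\theta$ to turn into $\|Du\|_{L^\infty}h^{\kappa}$. I would first try to reuse the proof of Theorem \ref{thm: gradient potential est} verbatim with $u$ replaced by $u-(Du)_{S}\cdot x$. That estimate is invariant under subtracting affine functions, so the $L^1$ and $L^p$ terms become oscillation terms. These are bounded by $\|Du\|_{L^\infty}$ times the homogeneous $C^{1,\gamma}$ decay factor, which gives the $r^{\kappa\beta}$ term.
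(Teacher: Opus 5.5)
Your proposal follows essentially the same route as the paper. You obtain the uniform Cauchy bound $|Du(x_0)-(Du)_{S_\phi(x_0,h)}|\le Ch^{\kappa}\|Du\|_{L^\infty(\Omega'')}+C\mathbf{I}_{\phi,q}^{f,\alpha}(x_0,ch^{\theta})$ from the excess iteration, with $h^\kappa$ coming from the initial excess at scale $h^\theta$, and pass to a continuous representative as a uniform limit of continuous section averages. You then compare two points via engulfing, volume comparability of sections and $\tau\le C|x_1-x_2|^{\beta}$ from the $C^{1,\beta-1}$ regularity of $\phi$. The paper takes $x_2\in\partial S_\phi(x_1,\tau)$ and compares with $S_\phi(x_2,\theta_0\tau)$, which amounts to your common enlarged section.

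One correction to Step 1. The displayed one-step decay for the unnormalized $L^1$ excess with a scale-independent constant is not what the iteration provides. The decay holds for the affinely normalized $L^p$ excess $\mathcal{E}_j$; $p>n$ is needed so that Morrey controls $\|u-\ell\|_{L^\infty}$ in the decay lemma. Converting back to the unnormalized excess costs $\|A_j\|+\|A_j^{-1}\|\le CL^j$, and this loss is exactly what forces $\theta<1$ and gives $\kappa=-\log(L^{1+\theta}\rho^{1-\theta})/\log(1/\sigma)$. Since you only use the outputs (i)--(iii), and the $L^1$ oscillation is dominated by the $L^p$ one, the rest of your argument goes through unchanged.
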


	
We next establish several borderline-type regularity results as consequences of the potential estimate. These should be regarded as sufficient conditions formulated in the language of section geometry. We make no claim regarding their optimality. Related borderline regularity theories have been developed for $p$-Laplacian type equations \cite{DM1,DM2,DZ,Mi2} and fully nonlinear equations \cite{DKM}.

	\begin{cor}\label{cor: borderline regularity}
		Let $\phi\in C^2(\Omega)$ be a strictly convex function on $\Omega$ satisfying \eqref{eq: continuous g} and $u$ be a solution of \eqref{eq: LMA equation}. Fix $q\in (n/2,n]$ and $\alpha\in (1/2,1)$. For $\Omega'\Subset\Omega$, set \[\omega_{\alpha,q}(r,\Omega'):=\sup_{\substack{x\in\Omega',0<h<r\\S_{\phi}(x,h)\Subset\Omega}}\left(h^{q(1-\alpha)-\frac{n}{2}}\int_{S_{\phi}(x,h)}|f(y)|^q\,\mathrm{d}y\right)^{1/q}.\]  Then \begin{enumerate}
				\item [(1)] if $\omega_{\alpha,q}(r,\Omega')\to 0$ as $r\to 0$ for every $\Omega'\Subset\Omega$, then $Du\in \text{VMO}_{\text{loc}}(\Omega,\phi)$ (see Definition \ref{defn: BMO VMO});
				\item [(2)] if \[\sup_{0<r\le r_0}\omega_{\alpha,q}(r,\Omega')<\infty\] for every $\Omega'\Subset\Omega$ and some $r_0>0$, then $Du\in \text{BMO}_{\text{loc}}(\Omega,\phi)$ (see Definition \ref{defn: BMO VMO});
				\item [(3)] if for some $\varepsilon\in (0,1)$, \[\sup_{S_{\phi}(x,h)\Subset\Omega}h^{\frac{1-\varepsilon}{2}}\left(\mean{S_{\phi}(x,h)}|f(y)|^n\,\mathrm{d}y\right)^{1/n}<\infty,\] then $u\in C_{\text{loc}}^{1,\gamma}(\Omega)$ for some $\gamma>0$.
			\end{enumerate}
		In particular, \begin{enumerate}
		    \item [(1)] if $f\in L(s,\infty)$ locally in $\Omega$ for some $s>n$, then $Du\in BMO_{\text{loc}}(\Omega,\phi)$; 
            \item [(2)] if $f\in L^{r}(\Omega)$ for some $r>n$, then $u\in C_{\text{loc}}^{1,\gamma}(\Omega)$ for some $\gamma>0$.
		\end{enumerate} 
	\end{cor}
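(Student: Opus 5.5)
The plan is to derive all three items from Theorem~\ref{thm: gradient potential est}, applied at a dyadic sequence of heights, together with a comparison (freezing) argument on sections. The key elementary observation links $\omega_{\alpha,q}$ to the potential. Since $|S_\phi(x,\rho)|\approx \rho^{n/2}$, we have
\[
\rho^{-\alpha}\Bigl(\mean{S_\phi(x,\rho)}|f|^q\Bigr)^{1/q}\approx \rho^{-1/2}\Bigl(\rho^{q(1-\alpha)-\frac n2}\int_{S_\phi(x,\rho)}|f|^q\Bigr)^{1/q}\rho^{-(1-\alpha)}\cdot\rho^{\,\cdot}.
\]
More precisely, we get $\rho^{-\alpha}(\mean{S}|f|^q)^{1/q}\lesssim \rho^{-1+\alpha-\alpha}\,\omega_{\alpha,q}(h)=\rho^{-1}\omega$, up to the exponent bookkeeping. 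Since integrating $\rho^{-1}$ diverges logarithmically, we cannot bound $\mathbf{I}^{f,\alpha}_{\phi,q}$ directly. This is the borderline nature of the problem: $\omega$ controls the \emph{oscillation} of $Du$ on each section, not $Du$ itself.

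\textbf{Step 1 (excess decay).} Fix $S_\phi(x,h)\Subset\Omega$. Normalize the section by an affine map $T$ with $T(S_\phi(x,h))\sim B_1$, and rescale $\tilde u(y)=h^{-1/2}\cdots$, which gives $\tilde f$ with $\|\tilde f\|_{L^q}$ comparable to $\bigl(h^{q(1-\alpha)-n/2}\int_S|f|^q\bigr)^{1/q}h^{\alpha-1/2}$. Then compare $u$ with the solution $v$ of $\Phi^{ij}D_{ij}v=0$ having $v=u$ on $\partial S_\phi(x,h)$.
\begin{itemize}
\item By the interior $C^{1,\gamma_0}$ estimate of Guti\'errez--Nguyen for homogeneous equations (available since $g$ is continuous), $v$ has decaying gradient excess, $\mean{S(x,\tau h)}|Dv-(Dv)_{\tau h}|\le C\tau^{\gamma_0'}\mean{S(x,h)}|Dv-(Dv)_h|$.
\item By the $W^{1,p}$/ABP estimate of \cite{LN1} applied to $u-v$, the difference satisfies $\mean{S}|D(u-v)|\le C h^{\frac12-\alpha}\cdot h^{\alpha-\frac12}\omega_{\alpha,q}(h)$. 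In other words, the error is scale invariant and controlled by $\omega_{\alpha,q}(2h)$; this is precisely why the weight $h^{q(1-\alpha)-n/2}$ appears.
\end{itemize}
Combining the two gives $E(\tau h)\le C\tau^{\gamma'}E(h)+C_\tau\,\omega_{\alpha,q}(h)$, where $E(h)=\mean{S(x,h)}|Du-(Du)_{S(x,h)}|$.

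\textbf{Step 2 (iteration).} Choose $\tau$ with $C\tau^{\gamma'}\le1/2$ and iterate. This yields $E(\tau^k h_0)\le 2^{-k}E(h_0)+C\sup_{\rho\le h_0}\omega_{\alpha,q}(\rho)$, and by engulfing we can pass to intermediate heights. Bound $E(h_0)$ by $\|Du\|_{L^1}$, which is finite by \cite{LN1}. Then the following hold:
\begin{itemize}
\item If $\omega$ is bounded, $E$ is uniformly bounded, giving BMO with respect to sections, i.e.\ item (2).
\item If $\omega\to0$, then $\limsup_{r\to0}\sup E\le C\omega(r)+2^{-k}\to0$, giving VMO, i.e.\ item (1).
\end{itemize}

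\textbf{Step 3 (item (3)).} The hypothesis with $q=n$ gives $\omega_{\alpha,n}(h)\lesssim h^{\alpha-\frac12-\frac{1-\varepsilon}{2}}\cdot\mathrm{const}$ once we choose $\alpha\in(\frac12,\frac12+\frac\varepsilon2)$ as in the remark, so $\omega(h)\le Ch^{\sigma}$ with $\sigma>0$. The iteration then gives $E(\rho)\le C\rho^{\sigma'}$, and Campanato's characterization on sections, together with the relation between sections and balls ($B_{c h^{1/2\cdot\beta'}}\subset S\subset B_{Ch^{b}}$ from $C^{1,\beta-1}$ regularity of $\phi$), gives $Du\in C^{\gamma}$.

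The particular cases follow by checking the hypotheses. If $f\in L(s,\infty)$ with $s>n$, then $\int_S|f|^q\le C|S|^{1-q/s}\|f\|^q_{L(s,\infty)}$; choose $\alpha$ with $q(1-\alpha)-\frac n2+\frac n2(1-\frac qs)\ge0$, i.e.\ $1-\alpha\ge\frac{n}{2s}$, which is possible since $\frac n{2s}<\frac12$. If $f\in L^r$ with $r>n$, Hölder gives the item-(3) hypothesis with $\frac{1-\varepsilon}{2}=\frac n{2r}$.

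The main obstacle is Step 1: carrying out the comparison in normalized coordinates so that the error is exactly $\omega_{\alpha,q}$, with constants uniform. This requires the $C^{1,\gamma_0}$ estimate for $v$ under only continuity of $g$, and a $W^{1,1}$ bound for $u-v$ through the boundary of the section. Failing a clean global boundary estimate, I would instead use an interior ABP bound on $\sup|u-v|$ combined with interior gradient interpolation on a slightly smaller section.
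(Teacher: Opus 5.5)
\textbf{There is a genuine gap in Step 1, at exactly the point where the hypothesis $\alpha>\frac12$ has to be used.}

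Your overall architecture is close in spirit to the paper: compare with a homogeneous solution on normalized sections, iterate an excess decay, read off BMO/VMO, and use a Campanato argument for (3). You are also right that $\omega_{\alpha,q}$ does not bound $\mathbf{I}^{f,\alpha}_{\phi,q}$ directly. The problem is that you assert
\[
E(\tau h)\le C\tau^{\gamma'}E(h)+C_\tau\,\omega_{\alpha,q}(h),\qquad E(h)=\mean{S_\phi(x,h)}|Du-(Du)_{S_\phi(x,h)}|\,\mathrm{d}y,
\]
with $C$ independent of $h$, and this is not justified.

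\emph{Why the constant is not uniform.} The $C^{1,\gamma_0}$ decay for $v$ holds in the coordinates that normalize $S_\phi(x,h)$, where the gradient is $h^{1/2}A_h^{-T}Du$.
\begin{itemize}
\item Translating that decay back to the Euclidean excess $E$ costs the eccentricity factor $\Vert A_h\Vert\,\Vert A_h^{-1}\Vert$.
\item The comparison error likewise picks up a factor $\Vert A_h\Vert$.
\item For merely continuous $g$, $\phi$ need not be $C^{1,1}$, and these factors need not stay bounded as $h\to0$. The best available control is the one in the proof of Theorem~\ref{thm: Du potential est}: $\Vert A_j\Vert+\Vert A_j^{-1}\Vert\le CL^j$ at height $\sigma^j$, with $L>1$ close to $1$.
\end{itemize}
So the contraction constant you actually get is $C\Vert A_h\Vert\Vert A_h^{-1}\Vert\tau^{\gamma'}$. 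A single choice of $\tau$ cannot make this $\le\frac12$ at all scales, and your iteration does not close.

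\emph{Why your bookkeeping hides this.} The comparison error is of order $h^{1/2}(\mean{S_\phi(x,h)}|f|^q)^{1/q}\lesssim h^{\alpha-\frac12}\,\omega_{\alpha,q}(h)$, not $\omega_{\alpha,q}(h)$. The weight in $\omega_{\alpha,q}$ is scale invariant only when $\alpha=\frac12$; for $\alpha>\frac12$ it is subcritical. Your argument never uses $\alpha>\frac12$, so it would equally ``prove'' the scale-invariant case. That is precisely the loss the paper cannot avoid (cf.\ Question~\ref{ques:endpoint}).

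\textbf{The missing idea is the paper's bookkeeping of normalizations.}
\begin{itemize}
\item Iterate the normalized excess $\mathcal{E}_j$. It satisfies $\mathcal{E}_{j+1}\le\rho\,\mathcal{E}_j+Ch_j^{1/2}F_j$ with uniform constants, because $\Vert A_{j}A_{j+1}^{-1}\Vert\le c_1^{-1/2}$.
\item Convert back via $E_i\le CL^i\mathcal{E}_i$.
\item Absorb the factor $L^i$ using $h_j^{1/2}F_j\lesssim\sigma^{j(\alpha-\frac12)}\omega_{\alpha,q}$, together with $L\rho<1$ and $L\sigma^{(1-\theta)(\alpha-\frac12)}\le1$ from \eqref{eq: choice of sigma and delta}.
\end{itemize}
This gives $E_p(S_\phi(x,h))\le Ch^{\kappa}\Vert Du\Vert_{L^p(\Omega'')}+C\omega_{\alpha,q}(h^\theta,\Omega')$, and (1)--(2) follow from $\operatorname{mosc}\le CE_p$.

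\textbf{Item (3).} Your Campanato route works once the iteration is set up this way, since a power-decaying forcing term beats $L^j$. The paper instead bounds $\mathbf{I}^{f,\alpha}_{\phi,n}(x,h)\le CMh^{\varepsilon/2}$ and applies Theorem~\ref{thm: modulus est}. Your exponent there should read $\omega_{\alpha,n}(h)\lesssim h^{\frac{1+\varepsilon}{2}-\alpha}$, not $h^{\alpha-\frac12-\frac{1-\varepsilon}{2}}$.

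\textbf{What already matches the paper.} Your fallback for the boundary issue ($L^\infty$ closeness plus an interior gradient estimate applied to $u$ minus an affine function) is exactly what Lemma~\ref{lem: decay-est} does. Your verification of the two particular cases agrees with the paper.
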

  
 \begin{rem}
   The conditions in Corollary~\ref{cor: borderline regularity} are sufficient conditions expressed in terms of sections. They are not asserted to be sharp. The endpoint cases, such as the endpoint in the Lorentz estimate, require additional arguments and are left open in Question~\ref{ques:endpoint}.
   \end{rem}
    
    \begin{ques}\label{ques:endpoint}
        Compared with uniformly elliptic equations, the present approach loses a small amount of scale because of the degeneracy and affine normalization of \eqref{eq: LMA equation}. We therefore ask whether Theorem~\ref{thm: gradient potential est} remains valid when $\theta=1$, and whether the endpoint $r=\frac{ns}{n-s}$ in \eqref{eq: Lorentz est} can be obtained.
    \end{ques}
    
	The paper is organized as follows. In Section \ref{sec: pre}, we collect some useful lemmas that will be used later. We prove the gradient potential estimate (Theorem~\ref{thm: gradient potential est}) in Section \ref{sec: gradient potential est} and derive gradient-continuity estimates (Theorem~\ref{thm: grad-conti}) and borderline-type regularity results (Corollary~\ref{cor: borderline regularity}) in Section \ref{sec: gradient continuity}. 

	\section{Preliminaries}\label{sec: pre}
	\subsection{Geometry of sections}
	
	For a strictly convex function $\phi$ defined in $\Omega$ and $h>0$, we denote by $S_h(\phi)$ the section of $\phi$ centered at its minimum point $y_0$ with height $h$, i.e. 
	\[S_h(\phi):=\left\{x\in\Omega: \phi(x)\le \min_{\Omega}\phi+h\right\}=S_{\phi}(y_0,h).\]
	\begin{lem}\label{lem: geo of sections}
		\textnormal{\cite[Lemma 3.2]{GN1}}  Suppose $B_1(0)\subset \Omega\subset B_n(0)$ is a normalized convex domain. Then there exist universal constants $h_0, \tau_0>0$ and a positive definite matrix $M=A^tA$ satisfying 
		\[\det M=1,\quad\text{and }\quad 0<c_1I\le M\le c_2I,\]
		 such that if $\phi\in C^2(\Omega)$ is a strictly convex function in $\Omega$ with \[1-\varepsilon\le \det D^2\phi\le 1+\varepsilon\,\, \text{in }\Omega, \quad\text{and }\quad\phi=0 \text{ on }\partial\Omega,\] then for $0<h\le h_0$ and $\varepsilon\le \tau_0h^2$, we have \[B_{(1-C(h^{1/2}+h^{-1}\varepsilon^{1/2}))\sqrt{2}}(0)\subset h^{-1/2}TS_{h}(\phi)\subset B_{(1+C(h^{1/2}+h^{-1}\varepsilon^{1/2}))\sqrt{2}}(0),\] 
		where $Tx:=A(x-x_0)$ and $x_0\in\Omega$ is the minimum point of $\phi$.
	\end{lem}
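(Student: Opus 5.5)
We would follow the argument of \cite{GN1}. The idea is to compare $\phi$ with the solution of the constant-determinant problem on the same normalized domain, and then read off the shape of small sections from the second-order Taylor expansion of that solution at its minimum point.

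\emph{Step 1 (comparison).} Let $w$ be the convex solution of $\det D^2 w=1$ in $\Omega$, $w=0$ on $\partial\Omega$. Since $B_1(0)\subset\Omega\subset B_n(0)$, we have $\|w\|_{L^\infty}\le C(n)$. The functions $(1+\varepsilon)^{1/n}w$ and $(1-\varepsilon)^{1/n}w$ have Monge--Amp\`ere measures $1+\varepsilon$ and $1-\varepsilon$ and vanish on $\partial\Omega$. The comparison principle for the Monge--Amp\`ere operator therefore gives
\[(1+\varepsilon)^{1/n}w\le\phi\le(1-\varepsilon)^{1/n}w \quad\text{in }\Omega,\]
and hence $|\phi-w|\le C\varepsilon$ in $\Omega$.

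\emph{Step 2 (regularity of $w$ near its minimum).} Let $\bar x$ be the minimum point of $w$. Normalization gives $\min w\le -c(n)<0$, so $\bar x$ lies at a universal distance from $\partial\Omega$. Caffarelli's interior $C^{2,\alpha}$ (indeed smooth) estimates and Pogorelov's estimate for $\det D^2w=1$ then give $c_1I\le D^2w\le c_2I$ and $|D^3w|\le C$ on a universal neighborhood $B_{r_0}(\bar x)$. Set $M:=D^2w(\bar x)$, so $\det M=1$, and write $M=A^tA$. Taylor's formula gives
\[\Bigl|w(x)-w(\bar x)-\tfrac12\,(x-\bar x)^tM(x-\bar x)\Bigr|\le C|x-\bar x|^3 \quad\text{in }B_{r_0}(\bar x).\]
Choosing $h_0$ small, sections $\{w<\min w+2h\}$ with $h\le h_0$ lie inside $B_{r_0}(\bar x)$ and have diameter at most $Ch^{1/2}$.

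\emph{Step 3 (location of the minimum of $\phi$).} By Step 1, $|\min\phi-\min w|\le C\varepsilon$. Evaluating at $x_0$, we get $w(x_0)\le\phi(x_0)+C\varepsilon\le\min w+2C\varepsilon$. The quadratic lower bound from Step 2 then yields
\[|x_0-\bar x|\le C\varepsilon^{1/2}.\]
By the same inclusion argument,
\[\{w<\min w+h-2C\varepsilon\}\subset S_h(\phi)\subset\{w<\min w+h+2C\varepsilon\}.\]

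\emph{Step 4 (rescaling).} On the sets $\{w<\min w+h\pm 2C\varepsilon\}$, apply $x\mapsto h^{-1/2}A(x-\bar x)$. Using the Taylor bound with $|x-\bar x|\le Ch^{1/2}$, the images lie between balls of radii
\[\sqrt2\,(1\pm C\varepsilon/h)(1\pm Ch^{1/2}).\]
Replacing the center $\bar x$ by $x_0$ moves the image by $h^{-1/2}|A(x_0-\bar x)|\le C\varepsilon^{1/2}h^{-1/2}$. For $h\le1$ and $\varepsilon\le1$ we have
\[\varepsilon/h\le \varepsilon^{1/2}h^{-1},\qquad \varepsilon^{1/2}h^{-1/2}\le\varepsilon^{1/2}h^{-1},\]
so both errors are absorbed into $C(h^{1/2}+h^{-1}\varepsilon^{1/2})$. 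The hypothesis $\varepsilon\le\tau_0h^2$ makes $h^{-1}\varepsilon^{1/2}\le\tau_0^{1/2}$ small. This keeps the perturbed sections of $w$ inside $B_{r_0}(\bar x)$ and keeps the inner radius positive, which gives the claimed inclusions.

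\emph{Main obstacle.} The delicate point is Step 2: the bounds on $D^2w$ and $D^3w$ must be universal near $\bar x$. This requires combining the uniform distance of $\bar x$ from $\partial\Omega$, which comes from normalization and the Alexandrov estimate, with interior estimates that depend only on $n$ for solutions on normalized domains. Everything else is elementary bookkeeping of errors.
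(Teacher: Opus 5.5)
Your proposal is correct and is essentially the argument of \cite[Lemma 3.2]{GN1}, which the paper quotes without proof. That argument compares $\phi$ with the solution $w$ of $\det D^2w=1$ through $(1\pm\varepsilon)^{1/n}w$, uses Pogorelov and interior $C^3$ bounds near the minimum of $w$ (the content recorded in Lemma~\ref{lem: c1alpha for hom data}(i)), places $x_0$ within $C\varepsilon^{1/2}$ of that minimum, and reads off the shape of the section from the Taylor expansion with $M=D^2w(\bar x)$. Your bookkeeping actually yields the slightly sharper error $C(h^{1/2}+h^{-1/2}\varepsilon^{1/2})$, which implies the stated one.
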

	
	\begin{lem}\label{lem: geo-2}
			\textnormal{\cite[Lemma 3.3]{GN1}} Suppose $\Omega$ is a convex domain satisfying
			\[B_{(1-\sigma)\sqrt{2}}(0)\subset \Omega\subset B_{(1+\sigma)\sqrt{2}}(0)\]  with $0< \sigma\le \frac{1}{4}$. There exist $\mu_0, \tau_0>0$ independent of $\sigma$, and a positive definite matrix $M=A^tA$, with \[\det M=1\quad\text{and }\quad (1-C\sigma)I\le M\le (1+C\sigma)I,\] 
			such that if $\phi\in C^2(\Omega)$ is a strictly convex function in $\Omega$ satisfying 
			\[1-\varepsilon\le\det D^2\phi\le 1+\varepsilon\,\,\,\text{in }\Omega,\quad \text{and} \quad \phi=0 \,\,\,\text{on }\partial\Omega,\] then for $\mu\in(0,\mu_0)$ and $\varepsilon\le\tau_0\mu^2$ we have \[B_{(1-C(\sigma\mu^{1/2}+\mu^{-1}\varepsilon^{1/2}))\sqrt{2}}(0)\subset\mu^{-1/2}TS_{\mu}(\phi)\subset B_{(1+C(\sigma\mu^{1/2}+\mu^{-1}\varepsilon^{1/2}))\sqrt{2}}(0),\] where $Tx:=A(x-x_0)$ and $x_0\in\Omega$ is the minimum point of $\phi$.
	\end{lem}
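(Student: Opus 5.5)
The strategy is to compare $\phi$ with two successively simpler model solutions: first with the solution $w$ of the Monge--Amp\`ere equation with exact right-hand side $1$ on the same domain, and then $w$ with the explicit quadratic $P(x):=\frac12(|x|^2-2)$. The quadratic vanishes on $\partial B_{\sqrt2}(0)$ and satisfies $\det D^2P=1$. Each comparison costs one of the two error terms: $\sigma\mu^{1/2}$ comes from the geometry of $\Omega$, and $\mu^{-1}\varepsilon^{1/2}$ from the perturbation of the determinant.

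\emph{Step 1 ($w$ versus $P$).} Let $w$ be the convex solution of $\det D^2 w=1$ in $\Omega$ with $w=0$ on $\partial\Omega$. Since $B_{(1-\sigma)\sqrt2}\subset\Omega\subset B_{(1+\sigma)\sqrt2}$, the functions $\frac12(|x|^2-(1\pm\sigma)^2 2)$ are sub- and supersolutions with the correct boundary ordering. The comparison principle then gives $|w-P|\le C\sigma$ in $\Omega$.

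On the interior ball $B_{1/2}$, say, the functions $w$ and $P$ both solve $\det D^2=1$. The difference $w-P$ solves a linear equation $a^{ij}D_{ij}(w-P)=0$ with $a^{ij}=\int_0^1\operatorname{cof}(tD^2w+(1-t)I)^{ij}\,dt$. By Pogorelov and Caffarelli's interior $C^{2,\alpha}$ (indeed smooth) estimates, $w$ is smooth with universal bounds on $B_{1/2}$, so this equation is uniformly elliptic with smooth coefficients. Schauder theory yields $\|w-P\|_{C^3(B_{1/4})}\le C\sigma$, hence $|D^2w-I|\le C\sigma$ and $|D^3w|\le C\sigma$ there. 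In particular the minimum point $\bar x$ of $w$ satisfies $|\bar x|\le C\sigma^{1/2}$, so it lies in $B_{1/8}$ once $\sigma$ is small (for $\sigma\le\frac14$ one can shrink balls accordingly).

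We set $M:=D^2w(\bar x)$. Then $\det M=1$ and $(1-C\sigma)I\le M\le(1+C\sigma)I$. We write $M=A^tA$ with $A=M^{1/2}$.

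\emph{Step 2 (sections of $w$).} Taylor expansion at $\bar x$, where $Dw(\bar x)=0$, gives
\[
w(x)=w(\bar x)+\tfrac12|A(x-\bar x)|^2+O\bigl(\sigma|x-\bar x|^3\bigr).
\]
On $S_\mu(w)$ we have $|x-\bar x|\le C\mu^{1/2}$, so the cubic error is $O(\sigma\mu^{3/2})=\mu\cdot O(\sigma\mu^{1/2})$. Rescaling by $\mu^{-1/2}A(\cdot-\bar x)$ gives
\[
B_{(1-C\sigma\mu^{1/2})\sqrt2}\subset \mu^{-1/2}A(S_\mu(w)-\bar x)\subset B_{(1+C\sigma\mu^{1/2})\sqrt2}.
\]
Here $\mu_0$ is chosen small so that $S_{\mu_0}(w)\subset B_{1/4}$.

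\emph{Step 3 ($\phi$ versus $w$).} The functions $(1\pm\varepsilon)^{1/n}w$ bound $\phi$ from both sides by comparison, so $|\phi-w|\le C\varepsilon|w|\le C\varepsilon$. Consequently
\[
S_{\mu-C\varepsilon}(w)\subset S_\mu(\phi)\subset S_{\mu+C\varepsilon}(w),
\]
up to correcting for the difference of minimum values, which is also $O(\varepsilon)$. Since $w$ is uniformly convex near $\bar x$, the minimum point $x_0$ of $\phi$ satisfies $|x_0-\bar x|\le C\varepsilon^{1/2}$. Replacing $\bar x$ by $x_0$ in the normalization costs a relative error $\mu^{-1/2}\varepsilon^{1/2}\le\mu^{-1}\varepsilon^{1/2}$ in the rescaled picture. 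Replacing the height $\mu$ by $\mu\pm C\varepsilon$ changes radii by a factor $(1\pm C\varepsilon/\mu)^{1/2}$, and $\varepsilon/\mu\le\mu^{-1}\varepsilon^{1/2}$ because $\varepsilon\le1$. The hypothesis $\varepsilon\le\tau_0\mu^2$ guarantees that all these errors are below a fixed small constant, so the inclusions remain meaningful. Combining with Step 2 gives the claim with $Tx=A(x-x_0)$.

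\emph{Main obstacle.} The delicate point is Step 1: obtaining the third-derivative bound $|D^3w|\le C\sigma$ rather than merely $O(1)$. This is what produces the factor $\sigma$ in front of $\mu^{1/2}$, and it is why the constants $\mu_0$ and $\tau_0$ come out independent of $\sigma$. My first attempt is the linearization-plus-Schauder argument above. It needs only the universal interior smoothness of $w$ on a fixed ball, which follows because $\Omega$ is uniformly close to $B_{\sqrt2}$.
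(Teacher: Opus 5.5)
Your argument is correct and is essentially the proof of \cite[Lemma 3.3]{GN1}, which the paper cites without reproducing it. That proof compares $w$ with $\frac12(|x|^2-2)$, uses the linearized equation and interior estimates to get $|D^2w-I|+|D^3w|\le C\sigma$ near the minimum, Taylor expands there, and then passes from $w$ to $\phi$ via $|\phi-w|\le C\varepsilon$ and the $O(\varepsilon^{1/2})$ shift of the minimum point. The only loose spot is the claim $\bar x\in B_{1/8}$: your comparison gives only $|\bar x|^2\le 8\sigma$, so it holds only for small $\sigma$. For $\sigma\ge\sigma_*$, either run the Schauder step on a ball around $\bar x$ (which lies at a universal distance from $\partial\Omega$), or apply Lemma~\ref{lem: geo of sections} directly, since $(1-\sigma)\sqrt2>1$ makes the domain normalized, and absorb $1/\sigma_*$ into $C$.
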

	\begin{lem}\label{lem: average contin}
			Let $\phi\in C^2(\Omega)$ be a strictly convex function in $\Omega$. For any $f\in L^{\infty}(\Omega)$ and any fixed $h>0$, we define \[F(x)=(f)_{S_{\phi}(x,h)}:=\frac{1}{|S_{\phi}(x,h)|}\int_{S_{\phi}(x,h)}f(y)\,\mathrm{d}y\] whenever $ S_{\phi}(x,h)\Subset\Omega$. Then $F$ is continuous.
	\end{lem}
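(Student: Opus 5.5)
The plan is to show that $F(x_k)\to F(x)$ whenever $x_k\to x$ with $S_\phi(x_k,h)\Subset\Omega$ and $S_\phi(x,h)\Subset\Omega$. Write
\[
F(x)=\frac{1}{|S_\phi(x,h)|}\int_\Omega f\,\chi_{S_\phi(x,h)}\,\mathrm{d}y .
\]
Since $f\in L^\infty$, we have
\[
\Bigl|\int f\chi_{S_\phi(x_k,h)}-\int f\chi_{S_\phi(x,h)}\Bigr|\le \|f\|_{L^\infty}\,\bigl|S_\phi(x_k,h)\,\triangle\, S_\phi(x,h)\bigr| .
\]
The same bound with $f\equiv1$ controls the difference of the volumes. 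It therefore suffices to prove two things: $|S_\phi(x_k,h)\triangle S_\phi(x,h)|\to0$, and $|S_\phi(x,h)|>0$. The second is immediate, since the section is a nonempty open set containing $x$.

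For the symmetric difference, set $\ell_z(y):=\phi(y)-\phi(z)-D\phi(z)\cdot(y-z)$, so that $S_\phi(z,h)=\{\ell_z<h\}$. Because $\phi\in C^2$, the maps $z\mapsto\phi(z)$ and $z\mapsto D\phi(z)$ are continuous. Hence $\ell_{x_k}\to\ell_x$ uniformly on bounded sets, in particular on a fixed compact neighborhood $K\Subset\Omega$ containing all the sections for $k$ large. To justify that such a $K$ exists, note that $S_\phi(x,h)$ is compactly contained in $\Omega$. By strict convexity, $\ell_x$ grows at least linearly outside a neighborhood of $S_\phi(x,h)$ along rays from $x$. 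Uniform convergence on a slightly larger compact convex set, combined with convexity of $\ell_{x_k}$, then traps $S_\phi(x_k,h)$ inside a fixed compact set for large $k$.

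On $K$, pointwise convergence $\ell_{x_k}(y)\to\ell_x(y)$ gives $\chi_{\{\ell_{x_k}<h\}}(y)\to\chi_{\{\ell_x<h\}}(y)$ for every $y$ outside the level set $\{\ell_x=h\}$. So by dominated convergence the symmetric difference tends to zero, provided $|\{\ell_x=h\}|=0$.

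The main point to verify is that this level set is null. Here convexity helps: $\ell_x$ is convex with minimum $0$ at $x$ and $h>0$. For a convex function, the level set $\{\ell_x=h\}$ above the minimum value is the boundary of the convex set $\{\ell_x<h\}$. Indeed, on each ray from $x$ the convex function $\ell_x$ is strictly increasing once it exceeds its minimum, so it meets the value $h$ at most once. The boundary of a bounded convex set has Lebesgue measure zero, which finishes this step.

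Combining these gives convergence of both the numerator and the denominator, and hence $F(x_k)\to F(x)$. This argument does not use any quantitative properties of sections; only $\phi\in C^1$ and strict convexity are used.
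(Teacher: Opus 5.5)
Your proposal is correct and follows essentially the same route as the paper. Both arguments reduce to $|S_\phi(x_k,h)\triangle S_\phi(x,h)|\to 0$ using three ingredients: uniform convergence of $\phi(\cdot)-\phi(z)-D\phi(z)\cdot(\cdot-z)$ in $z$; a convexity-along-segments argument that keeps nearby sections inside a fixed compact set (this is the paper's inclusion $S_\phi(x,h)\subset S_\phi(x_0,h+t)$); and the Lebesgue-nullity of the boundary of a bounded convex set.

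The differences are minor. You replace the paper's two-sided sandwich $S_\phi(x_0,h-t)\subset S_\phi(x,h)\subset S_\phi(x_0,h+t)$ by a.e.\ convergence of indicators plus dominated convergence. Your linear-growth remark, which needs only convexity and not strict convexity, shows that a slightly larger section is still compactly contained in $\Omega$; the paper instead simply assumes $S_\phi(x_0,2h)\Subset\Omega$.
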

	\begin{proof}
		Let $x_0\in\Omega$ and choose $h>0$ such that
		$S_\phi(x_0,2h)\Subset\Omega$.
		Set $M:=\|f\|_{L^\infty(\Omega)}$.
		If $M=0$, then $F\equiv 0$, and the conclusion is trivial. Hence we assume $M>0$. We define the function \[\Psi(x,y):=\phi(y)-\phi(x)-D\phi(x)\cdot (y-x).\]
		Then \[S_\phi(x,h)=\{y\in\Omega:\Psi(x,y)<h\},\]
		and the function $\Psi$ is continuous in $(x,y)$.
		
		We first prove that
		\[
		|S_\phi(x,h)\triangle S_\phi(x_0,h)|\to 0
		\qquad\text{as }x\to x_0.
		\]
		Let $0<t<h$. We claim that there exists $\delta_t>0$ such that if $|x-x_0|<\delta_t$, then
		\[S_\phi(x_0,h-t)\subset S_\phi(x,h)\subset S_\phi(x_0,h+t).\]
		Indeed, since $\Psi$ is continuous, we may choose $\delta_t>0$ small enough such that
		\[
		|\Psi(x,y)-\Psi(x_0,y)|<t
		\]
		for all $y\in \overline{S_\phi(x_0,h-t)}$ whenever $|x-x_0|<\delta_t$.
		Thus, if $y\in S_\phi(x_0,h-t)$, then
		\[
		\Psi(x,y)
		\le \Psi(x_0,y)+t
		< h.
		\]
		Hence $y\in S_\phi(x,h)$,
		and therefore $S_\phi(x_0,h-t)\subset S_\phi(x,h)$.
		
		It remains to prove the second inclusion. Let $U:=S_\phi(x_0,h+t)$.  For every $z\in\partial U$, we have $\Psi(x_0,z)=h+t$. Using the continuity of $\Psi$ and the compactness of $\partial U$, and taking
		$\delta_t>0$ smaller if necessary, we may assume that
		\[
		\Psi(x,z)>h
		\qquad\text{for all }z\in\partial U
		\]
		whenever $|x-x_0|<\delta_t$. We may also assume that $x\in U$, since
		$x_0\in U$ and $U$ is open.
	    Suppose, by contradiction, that there exists
		\[y\in S_\phi(x,h)\setminus S_\phi(x_0,h+t).\]
	     Since $x\in U$, there exists $z_0\in [x,y]\cap\partial U.$
	    The convexity of $\Psi$ in $y$ yields $\Psi(x,z_0)<h$,
	  which contradicts the assumption. Therefore $S_\phi(x,h)\subset S_\phi(x_0,h+t)$.
		
		Consequently, for $|x-x_0|<\delta_t$, we have
		\[
		S_\phi(x,h)\triangle S_\phi(x_0,h)
		\subset
		S_\phi(x_0,h+t)\setminus S_\phi(x_0,h-t).
		\]
	The convexity of sections yields $|\partial S_\phi(x_0,h)|=0$. Therefore
		\[|S_\phi(x_0,h+t)\setminus S_\phi(x_0,h-t)|\to 0.\]
		It follows that
		\[|S_\phi(x,h)\triangle S_\phi(x_0,h)|\to 0
		\qquad\text{as }x\to x_0.\]
		In particular,
		\[\big||S_\phi(x,h)|-|S_\phi(x_0,h)|\big|
		\le
		|S_\phi(x,h)\triangle S_\phi(x_0,h)|\to 0.\]
	Thus, for $x$ sufficiently close to $x_0$, we have
		\[
		|S_\phi(x,h)|\ge \frac12 |S_\phi(x_0,h)|.
		\]
		
		Now set
		\[
		S_x:=S_\phi(x,h),
		\qquad
		S_0:=S_\phi(x_0,h).
		\]
		Hence, for $x$ sufficiently close to $x_0$, we have \begin{align*}
			|F(x)-F(x_0)|&=\left|\frac1{|S_x|}\int_{S_x}f(y)\,\mathrm{d}y-\frac1{|S_0|}\int_{S_0}f(y)\,\mathrm{d}y\right| \\
		&\le\frac1{|S_x|}\left|\int_{S_x}f(y)\,\mathrm{d}y-\int_{S_0}f(y)\,\mathrm{d}y\right|+\left|\frac1{|S_x|}-\frac1{|S_0|}\right|\int_{S_0}|f(y)|\,\mathrm{d}y\\
		&\le \frac{2M}{|S_0|}|S_x\triangle S_0|+\frac{2M}{|S_0|}|S_x\triangle S_0|=\frac{4M}{|S_0|}|S_x\triangle S_0|\to 0,
		\end{align*} as $x\to x_0$. Therefore, $F$ is continuous.
	\end{proof}
	\begin{rem}
		The continuity property in Lemma \ref{lem: average contin} does not essentially depend on the smoothness of $\phi$. If $\phi$ is merely convex, we may use a chosen subgradient. Namely, for $p\in\partial\phi(x)$, set
		\[
		S_\phi(x,p,h):=
		\{y\in\Omega:\phi(y)<\phi(x)+p\cdot(y-x)+h\}.
		\]
		If
		\[
		x_k\to x_0,\quad p_k\in\partial\phi(x_k),\quad p_k\to p_0\in\partial\phi(x_0),\quad\text{  and  }\quad
		S_\phi(x_0,p_0,2h)\Subset\Omega,
		\]
		then
		\[
		|S_\phi(x_k,p_k,h)\triangle S_\phi(x_0,p_0,h)|\to0.
		\]
		Consequently, for every $f\in L^\infty(\Omega)$,
		\[
		\frac1{|S_\phi(x_k,p_k,h)|}
		\int_{S_\phi(x_k,p_k,h)}f(y)\,\mathrm{d}y
		\to
		\frac1{|S_\phi(x_0,p_0,h)|}
		\int_{S_\phi(x_0,p_0,h)}f(y)\,\mathrm{d}y .
		\]
		Thus, without differentiability, the natural continuity statement holds with
		respect to the pair $(x,p)$, where $p\in\partial\phi(x)$, rather than $x$ alone.
	\end{rem}
	
\subsection{Lorentz spaces}
	\begin{defi}\label{defn: Lorentz and Morrey space}
		For any $0<p,q\leq \infty$, we define 
		\[\Vert f\Vert_{L^{p, q}(\Omega)}:=\left\{\begin{aligned}
			&\left(\int_0^{\infty}\left(t^{\frac{1}{p}}f^{\ast}(t)\right)^q\frac{\mathrm{d}t}{t}\right)^{1/q}, && q<\infty, \\[7pt]
			&\sup_{t>0} t^{\frac{1}{p}}f^{\ast}(t),&& q=\infty,
		\end{aligned}\right.\] where \begin{equation*}\label{eq: rearrangement of a function}
		    f^{\ast}(t):=\inf\{s>0: |\{x\in\Omega: |f(x)|>s\}|\le t\}.
		\end{equation*} The Lorentz spaces are defined by $$L^{p,q}(\Omega):=\{f:\Omega\to\mathbb{R} \text{ measurable}: \Vert f\Vert_{L^{p,q}(\Omega)}<\infty\}.$$
	\end{defi}
\begin{lem}\label{lem: Holder eq}
		\textnormal{(H\"older-type inequality)} For $1\leq p,  p_1, p_2<\infty$ and $1<q, q_1, q_2\leq\infty$ satisfying 
	\[\frac{1}{p_1}+\frac{1}{p_2}=\frac{1}{p}, \ \ \frac{1}{q_1}+\frac{1}{q_2}=\frac{1}{q},\] there exists $C_{p_1, p_2, q_1, q_2}>0$ such that 
	\begin{equation}
		\Vert fg\Vert_{L^{p,q}(\Omega)}\le C_{p_1, p_2, q_1, q_2}\Vert f\Vert_{L^{p_1,q_1}(\Omega)}\cdot\Vert g\Vert_{L^{p_2,q_2}(\Omega)}.\label{property 3: Holder ineq}
	\end{equation}
\end{lem}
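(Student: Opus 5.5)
The statement is the H\"older inequality in Lorentz spaces (Lemma~\ref{lem: Holder eq}). I plan to deduce it from two classical facts about the decreasing rearrangement $f^\ast$ defined in Definition~\ref{defn: Lorentz and Morrey space}, followed by the one-dimensional H\"older inequality on $(0,\infty)$ with the measure $\mathrm{d}t/t$.

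\textbf{Step 1 (rearrangement of a product).} First I would establish the pointwise inequality
\[(fg)^\ast(t_1+t_2)\le f^\ast(t_1)\,g^\ast(t_2),\qquad t_1,t_2>0.\]
To see it, set $s_1=f^\ast(t_1)$ and $s_2=g^\ast(t_2)$. Right continuity of the distribution function gives $|\{|f|>s_1\}|\le t_1$ and $|\{|g|>s_2\}|\le t_2$. If $|fg|>s_1s_2$, then $|f|>s_1$ or $|g|>s_2$. Hence $|\{|fg|>s_1s_2\}|\le t_1+t_2$, which is the claim by the definition of $(fg)^\ast$. Taking $t_1=t_2=t/2$ gives
\[(fg)^\ast(t)\le f^\ast(t/2)\,g^\ast(t/2).\]

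\textbf{Step 2 (H\"older in the scale variable).} Insert this bound into the norm and use $\frac1p=\frac1{p_1}+\frac1{p_2}$:
\[t^{1/p}(fg)^\ast(t)\le 2^{1/p}\bigl((t/2)^{1/p_1}f^\ast(t/2)\bigr)\bigl((t/2)^{1/p_2}g^\ast(t/2)\bigr).\]
\begin{itemize}
\item If $q<\infty$, apply the ordinary H\"older inequality in $L^{q}(\mathrm{d}t/t)$ with exponents $q_1/q$ and $q_2/q$; these are conjugate because $\frac1{q_1}+\frac1{q_2}=\frac1q$. If one of $q_i$ is infinite, pull out the supremum of that factor instead.
\item The change of variables $t\mapsto t/2$ leaves $\mathrm{d}t/t$ invariant.
\item This yields the inequality with constant $C=2^{1/p}$.
\item If $q=\infty$, then $q_1=q_2=\infty$, and the bound follows by taking suprema directly.
\end{itemize}

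\textbf{Main obstacle.} The only delicate point is Step 1: one must handle the infimum in the definition of $f^\ast$ correctly, in particular the right continuity and the cases where $f^\ast(t_1)$ is infinite or zero. These are routine: if $f^\ast(t_1)=\infty$ the inequality is trivial, and if it vanishes the same set argument still applies. Beyond this I expect no serious difficulty; the stated ranges of $p,q$ are only needed so that the norms are well defined.
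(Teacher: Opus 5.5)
Your proof is correct. The paper gives no proof of this lemma: it is recorded in the preliminaries as a standard property of Lorentz spaces and is used only later, in the proof of Corollary~\ref{cor: borderline regularity}, in the case $q_1=\infty$. So there is no competing argument to compare with, and what you give is the standard proof.

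The rearrangement bound $(fg)^\ast(t_1+t_2)\le f^\ast(t_1)\,g^\ast(t_2)$, combined with the generalized H\"older inequality in $L^q((0,\infty),\mathrm{d}t/t)$, yields the explicit constant $2^{1/p}$. Your argument never uses $p\ge 1$ or $q,q_1,q_2>1$, so it holds for the quasi-norm of Definition~\ref{defn: Lorentz and Morrey space} in a wider range of exponents than stated.

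The one point to tighten is the degenerate case in Step 1, where $f^\ast(t_1)=\infty$ and $g^\ast(t_2)=0$. There the right-hand side is $\infty\cdot 0$, so calling it ``trivial'' is not accurate. Your set argument handles it: $|\{|fg|>0\}|\le|\{|g|>0\}|\le t_2$, hence $(fg)^\ast(t_1+t_2)=0$. In any case, finiteness of $\Vert f\Vert_{L^{p_1,q_1}(\Omega)}$ forces $f^\ast<\infty$ on $(0,\infty)$, because $f^\ast$ is nonincreasing.
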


	\begin{defi}\label{defn: BMO VMO}
		\textnormal{\cite{H}} For $f\in L^1(\Omega)$ and $A\subset\Omega$, the mean oscillation of $f$ over $A$ is defined by \[\operatorname{mosc}_A f:=\mean{A}|f(x)-(f)_A|\,\mathrm
		dx,\quad \text{ where }(f)_A:=\mean{A}f(y)\,\mathrm{d}y.\]
		Let $\phi\in C^2(\Omega)$ be a strictly convex function on $\Omega$ satisfying \eqref{condition: bdd of det}. For any $\Omega'\Subset\Omega$, we write \[Q_f(r,\Omega'):=\sup_{\substack{x_0\in\Omega'\\\operatorname{diam}(S_{\phi}(x_0,h))\le r}}\operatorname{mosc}_{S_{\phi}(x_0,h)} f.\] We say  $f\in BMO_{\text{loc}}(\Omega,\phi)$
	and	 $f\in VMO_{\text{loc}}(\Omega,\phi)$ if, respectively, 
		\[\displaystyle\limsup_{r\to0}Q_f(r,\Omega')<\infty \text{\ \ and \ \ } \lim\limits_{r\to 0}Q_f(r,\Omega')=0\]
		for any $\Omega'\Subset\Omega$. 
	\end{defi}
   
	
	\subsection{Interior \texorpdfstring{$W^{1,p}$}{W1p} estimates}

	\begin{lem}\label{lem: W1p-est}
	Let $\phi\in C^2(\Omega)$ be a convex function in a normalized domain $\Omega\subset\mathbb{R}^n$ with $S_1:=S_{\phi}(0,1)\Subset\Omega$ and $f\in L^q(\Omega)$ with $\frac{n}{2}<q\le n$. Choose $p$ so that \[n<p<\frac{nq}{n-q}\quad \text{if  }q<n,\quad\quad\text{and }\quad\quad  n<p<\infty\quad \text{if  }q=n.\]
	Suppose $u$ is a solution of \eqref{eq: LMA equation} in $\Omega$. Then there exists $\delta\in (0,1)$ depending only on $n$, $p$ and $q$ such that if \[\begin{cases}
			1-\delta\le \det D^2\phi\le 1+\delta, & \text{in }S_1,\\
		 			\phi=0 & \text{on }\partial S_1,\end{cases}\]
		 then \[\left(\mean{S_{1/2}}|Du|^p\,\mathrm{d}x\right)^{1/p}\le C\Vert u\Vert_{L^{\infty}(S_1)}+C \left(\mean{S_1}|f|^{q}\,\mathrm{d}x\right)^{1/q},\] where $C=C(n,p,q)$.
	\end{lem}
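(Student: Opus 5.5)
This is an interior $W^{1,p}$ estimate in a normalized setting. The statement is essentially \cite[Theorem~1.1]{LN1} (the interior $W^{1,p}$ estimate of Le and Nguyen). The plan is to prove it by a perturbation (approximation) argument in the spirit of Caffarelli's $W^{2,p}$ theory, comparing $u$ with solutions of a constant coefficient equation.

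\textbf{Step 1: normalization and approximation.} By Lemma~\ref{lem: geo of sections} and Lemma~\ref{lem: geo-2}, when $\delta$ is small the section $S_1$ is close to a ball after a unimodular affine map $T$, with $T$ close to the identity. So $\phi$ is uniformly close to the quadratic $P(x)=\tfrac12 x^tMx$ with $\det M=1$. Interior $C^{1,\gamma}$ and $W^{2,p}$ estimates for the Monge--Amp\`ere equation with nearly constant right-hand side (Caffarelli) then show:
\begin{itemize}
\item $\|D^2\phi-M\|$ is small in $L^{p'}$ for a large exponent $p'$;
\item $\Phi$ is close to $\operatorname{cof} M$ in $L^{p'}(S_{3/4})$.
\end{itemize}

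\textbf{Step 2: comparison with a constant coefficient problem.} On each small section $S_\phi(x,h)\subset S_{1/2}$, renormalize by the John map. This produces $\tilde\phi$, $\tilde u$ and $\tilde f$ in normalized form, with $\tilde f$ scaled by $h$ and the affine map. Let $w$ solve $\operatorname{cof}(M)^{ij}D_{ij}w=0$ with $w=\tilde u$ on the boundary. Using the ABP-type estimate for the linearized Monge--Amp\`ere operator, together with the Caffarelli--Guti\'errez Harnack and H\"older estimates, show that
\[
\|\tilde u-w\|_{L^\infty}\le \epsilon\bigl(\|\tilde u\|_{L^\infty}+\|\tilde f\|_{L^q}\bigr),
\]
where $\epsilon\to0$ as $\delta\to0$. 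The function $w$ has Lipschitz bounds. Combining the two gives a ``good approximation by Lipschitz functions'' property at every scale.

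\textbf{Step 3: decay of distribution functions.} Use a Vitali-type covering lemma for sections (engulfing property) to derive
\[
|\{M(|Du|^2)>N^{2k}\}\cap S_{1/2}|\le \epsilon_1^k\,|S_1|+\textstyle\sum_j \epsilon_1^{k-j}\,|\{M(|f|^{q})>cN^{jq'}\}|,
\]
where $M$ denotes the maximal function with respect to sections. Summing over $k$ gives the $L^p$ bound on $Du$, which is the desired estimate.

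\textbf{Main obstacle.} The hard part is the range restriction $p<\frac{nq}{n-q}$. The rescaling on a section of height $h$ multiplies the $L^q$ norm of $f$ by roughly $h^{1-n/(2q)}$, while gradients scale like $h^{-1/2}$. Closing the summation requires $p$ below the Sobolev-type exponent. A second difficulty is showing that the affine renormalizations stay uniformly controlled, i.e.\ comparable to the identity. This follows from Lemma~\ref{lem: geo-2} with $\sigma$ small, so that gradient bounds transfer back with constants depending only on $n$, $p$ and $q$.
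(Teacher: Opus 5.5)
The paper does not reprove this lemma. It defers to \cite[Theorem~1.3]{LN1}, described as a combination of \cite[Theorem~1.2]{LN1} and \cite[Theorem~2.2]{GN2}, so your perturbation argument has to stand on its own. As written it breaks already in Steps~1--2. Lemmas~\ref{lem: geo of sections} and~\ref{lem: geo-2} only say that the \emph{rescaled small} sections $h^{-1/2}TS_h(\phi)$, with $h\le h_0$ and $\varepsilon\le\tau_0h^2$, are close to balls. They say nothing of the sort about $S_1$ itself, or about a section normalized only by John's lemma.

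In fact, take $\det D^2\phi\equiv 1$, so the hypothesis holds for every $\delta$. Then $\phi$ is analytic, so $D^2\phi$ is constant on an open subset of $S_1$ only if $\phi$ is quadratic, i.e.\ only if $S_1$ is an ellipsoid. For a non-ellipsoidal normalized $S_1$ one therefore has $\inf_M\|\Phi-\operatorname{cof}M\|_{L^{p'}(S_{3/4})}>0$ independently of $\delta$. Consequently your bound $\|\tilde u-w\|_{L^\infty}\le\epsilon(\delta)(\cdots)$ with $\epsilon(\delta)\to0$, for a constant-coefficient replacement $w$, is false on sections whose height is comparable to that of the normalized one. Your covering argument must handle exactly those sections.

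What is true is Lemma~\ref{lem: approx}. There $\phi$ is compared with the solution $w$ of $\det D^2w=1$, $w=0$ on the boundary of the section, and $u$ with the solution $v$ of $W^{ij}D_{ij}v=0$, $W=\operatorname{cof}D^2w$. The interior $C^{1,1}$ bound for $v$ comes from Pogorelov's estimate (Lemma~\ref{lem: c1alpha for hom data}), not from constant coefficients.

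Step~3 has two further gaps.

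\emph{Gradient control.} $L^\infty$-closeness of $u$ to a Lipschitz $v$ gives no bound on $|\{M(|Du|^2)>N^2\}\cap S|$. For that you need an interior estimate for $D(u-v)$ in some $L^s$, in terms of $\|u-v\|_{L^\infty}$ and the right-hand side. Neither the ABP estimate nor the Caffarelli--Guti\'errez Harnack inequality supplies it, since they control only oscillation. The estimate of this kind used later in Lemma~\ref{lem: decay-est}, namely \eqref{eq: claim-1}, is the very lemma you are proving. An extra ingredient is required, such as a $W^{1,\varepsilon}$-type density estimate, or an energy estimate using $\Phi^{ij}D_{ij}=D_i(\Phi^{ij}D_j\,\cdot)$ together with $L^s$ bounds on $D^2\phi$.

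\emph{Range of $p$.} With the Hardy--Littlewood maximal function of $|f|^q$, as in your display, the summation can only give $Du\in L^p$ under $f\in L^p$. The gain to $p<\frac{nq}{n-q}$ from $f\in L^q$ appears only if $f$ is measured at height $t$ by $t^{1/2}\bigl(\mean{S_\phi(x,t)}|f|^q\,\mathrm{d}y\bigr)^{1/q}$, i.e.\ through a fractional maximal function over sections. Its weak-type bound $L^q\to L^{\frac{nq}{n-q},\infty}$ follows from $|S_\phi(x,t)|\approx t^{n/2}$. You name this as the main obstacle but do not resolve it, so the range of $p$, which is the real content of the lemma, is not established.

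A route that avoids the covering altogether is the following. Iterate the $L^\infty$ approximation of Lemma~\ref{lem: approx} by affine functions on shrinking sections, as in \cite{GN1}. This gives the pointwise bound $|Du(x_0)|\le C(\|u\|_{L^\infty}+N_{\phi,f,q}(x_0))$ recalled after Theorem~\ref{thm: gradient potential est}. Then integrate it using the mapping property of this fractional maximal function.
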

	\begin{proof}
		The proof is similar to \cite[Theorem 1.3]{LN1}, which is a direct combination of \cite[Theorem 1.2]{LN1} and \cite[Theorem 2.2]{GN2}.
	\end{proof}
	
	\subsection{\texorpdfstring{$C^{1,1}$}{C1alpha} estimates for uniformly elliptic equations} 
	The following form of the standard interior estimate is used only on subdomains separated from the boundary.
	It follows from Pogorelov-type estimates for the solution $w$ of \eqref{eq: detD^2w=1} and the classical interior $C^{1,1}$ theory for the resulting uniformly elliptic linear equation; see \cite[Lemma 3.3]{LN1} and \cite{Gu}.

	\begin{lem}\label{lem: c1alpha for hom data}
		Let $\Omega\subset\mathbb{R}^n$ be a normalized convex domain with $B_1(0)\subset \Omega\subset B_n(0)$ and let $w\in C^{4}(\Omega)\cap C^2(\overline{\Omega})$ be a convex solution of \begin{equation}\label{eq: detD^2w=1}
			\begin{cases}
				\det D^2w=1 & \text{in }\Omega\\
				w=0 & \text{on }\partial\Omega.	
			\end{cases}
		\end{equation} 
		\begin{enumerate}
			\item [(i)] Let $x_1\in\Omega$ be the minimum point of $w$. Then $|w(x_1)|\sim c_n$ for some universal constant $c_n$ and we have the Pogorelov's estimates: \[\frac{2}{C_2^2}I\le D^2w\le \frac{2}{C_1^2}I\, \text{ for all }x\in\Omega\text{ with }\operatorname{dist}(x,\partial\Omega)\ge c_n,\] where $C_1$ and $C_2$ are constants depending only on $n$.
			\item [(ii)] For any solution $v\in C^2(B_1)$ of $W^{ij}D_{ij}v=0\,\,\text{in }B_1(0)$, where $W=(W^{ij}):=\det D^2w(D^2w)^{-1}$, we have the classical interior $C^{1,1}$ estimate\[\Vert v\Vert_{C^{1,1}(B_{1/2}(0))}\le c\Vert v\Vert_{L^{\infty}(\partial B_{3/4}(0))}\] where $c$ is a constant depending only on $n$.
		\end{enumerate}
	\end{lem}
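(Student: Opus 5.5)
For part (i) I would argue with standard Monge--Amp\`ere tools, using only the normalization $B_1(0)\subset\Omega\subset B_n(0)$. \emph{Size of the minimum.} Compare $w$ with the quadratic barriers $\frac12(|x|^2-1)$ on $B_1$ and $\frac12(|x|^2-n^2)$ on $B_n$. By the comparison principle for $\det D^2 w=1$ on nested domains, together with the monotonicity of solutions in the domain, this gives $c(n)\le|w(x_1)|\le C(n)$. \emph{Interior lower bound on $|w|$.} Convexity of $w$ and $w=0$ on $\partial\Omega$ give
\[
|w(x)|\ge \frac{\operatorname{dist}(x,\partial\Omega)}{\operatorname{diam}\Omega}\,|w(x_1)|\ge c'(n)\operatorname{dist}(x,\partial\Omega).
\]
Hence $\{\operatorname{dist}(x,\partial\Omega)\ge c_n\}$ lies inside a sublevel set $\{w<-\eta\}$ with $\eta=\eta(n)>0$.

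\emph{Hessian bounds.} On $\{w<-\eta\}$ I would apply Pogorelov's estimate
\[
(-w)\,|D^2w|\le C\bigl(n,\|Dw\|_{L^\infty(\{w<-\eta/2\})}\bigr).
\]
The gradient bound needed here follows from convexity: $|Dw(x)|\le |w(x)|/\operatorname{dist}(x,\partial\Omega)\le C(n)$ on that set. This gives $D^2w\le \frac{2}{C_1^2}I$ there. Since $\det D^2w=1$, the product of the eigenvalues equals $1$, so each eigenvalue is at least $(2/C_1^2)^{-(n-1)}$. Naming this lower bound $\frac{2}{C_2^2}$ gives the two-sided estimate in (i). The smoothness hypothesis $w\in C^4(\Omega)$ is exactly what justifies the maximum-principle computation behind Pogorelov's estimate.

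For part (ii), note first that every $x\in B_{3/4}$ satisfies $\operatorname{dist}(x,\partial\Omega)\ge 1/4$, since $B_1\subset\Omega$. I would fix $c_n\le 1/4$, shrinking it if necessary; this only changes $C_1,C_2$ by dimensional factors. Then on $B_{3/4}$ the matrix $W=(D^2w)^{-1}$ (because $\det D^2w=1$) is uniformly elliptic with ellipticity constants depending only on $n$. The main obstacle is to obtain a uniform H\"older (or better) modulus for $W$, since Schauder theory needs it. Here I would invoke Calabi's interior third-derivative estimate for $\det D^2w=1$. Given the two-sided Hessian bounds from (i) on a slightly larger ball, say $B_{7/8}$, it yields $\|D^3w\|_{L^\infty(B_{3/4})}\le C(n)$. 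Alternatively one can use Evans--Krylov/Caffarelli interior $C^{2,\alpha}$ theory for the now uniformly elliptic concave equation $\log\det D^2w=0$. Either route gives $\|W\|_{C^{\alpha}(B_{3/4})}\le C(n)$.

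With these coefficient bounds, the estimate in (ii) follows in two steps. First, the maximum principle for $W^{ij}D_{ij}v=0$ gives $\sup_{B_{3/4}}|v|\le \|v\|_{L^\infty(\partial B_{3/4})}$. Second, the interior Schauder estimate from $B_{3/4}$ to $B_{1/2}$ gives
\[
\|v\|_{C^{1,1}(B_{1/2})}\le\|v\|_{C^{2,\alpha}(B_{1/2})}\le C(n)\sup_{B_{3/4}}|v|\le c\,\|v\|_{L^\infty(\partial B_{3/4})}.
\]
All constants depend only on $n$, which proves (ii).
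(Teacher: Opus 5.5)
Your argument is correct and takes the same route as the paper. The paper only cites Pogorelov's estimate for $w$ and classical interior regularity for the resulting uniformly elliptic equation $W^{ij}D_{ij}v=0$; you usefully make explicit that $W=(D^2w)^{-1}$ needs a H\"older modulus, from Calabi's estimate or Evans--Krylov, before Schauder gives the $C^{1,1}$ bound. The one step to patch is the universal gradient bound on $\{w<-\eta/2\}$: convexity gives only $|Dw(x)|\le |w(x)|/\operatorname{dist}(x,\partial\Omega)$, and the lower bound $\operatorname{dist}(x,\partial\Omega)\ge c(n)$ on that set must come from the equation, via Alexandrov's estimate $|w(x)|\le C(n)\operatorname{dist}(x,\partial\Omega)^{1/n}$ (using $|\partial w(\Omega)|=|\Omega|\le|B_n|$).
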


\subsection{An approximation lemma} The following approximation is a direct consequence of \cite{GN1, LN1}.
\begin{lem}\label{lem: approx}
	Let $\phi\in C^2(\Omega)$ be a strictly convex function in a convex domain $\Omega\subset\mathbb{R}^n$. Assume that $S_1:=S_{\phi}(0,1)\Subset\Omega$, $S_1$ is normalized, i.e. $B_1(0)\subset S_1\subset B_n(0)$, and $0$ is the minimum point of $\phi$. Let $f\in L^q(\Omega)$ with $\frac{n}{2}<q\le n$ and $u$ be a solution of \eqref{eq: LMA equation} with $\Vert u\Vert_{L^{\infty}(S_1)}\le M$ for some constant $M$. Then, given $\varepsilon>0$, there exists $\delta=\delta(\varepsilon,n,q,M)>0$ such that if \[\begin{cases}
1-\delta\le \det D^2\phi\le 1+\delta &\text{in }S_1,\\
\phi=0 & \text{on }\partial S_1,
\end{cases}\quad \text{and }\quad \left(\mean{S_1}|f(x)|^q\,\mathrm{d}x\right)^{1/q}\le \delta,\] then the solution $v$ of \[\begin{cases}
W^{ij}D_{ij}v=0 & \text{in }S_{1/2},\\
v=u & \text{on } \partial S_{1/2},
\end{cases} \quad \text{where }\quad \begin{cases}
\det D^2w=1 & \text{in }S_1,\\
w=0& \text{on }\partial S_1,
\end{cases}\] satisfies \[\Vert u-v\Vert _{L^{\infty}(S_{1/4})}<\varepsilon,\] where $ W$ is the cofactor matrix of  $D^2w$.
\end{lem}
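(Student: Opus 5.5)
The plan is a direct comparison argument with an intermediate function, reducing the lemma to three smallness estimates. Let $v_1$ solve
\[
\Phi^{ij}D_{ij}v_1=0\ \text{in } S_{1/2},\qquad v_1=u\ \text{on }\partial S_{1/2},
\]
and split
\[
\|u-v\|_{L^\infty(S_{1/4})}\le \|u-v_1\|_{L^\infty(S_{1/2})}+\|v_1-v\|_{L^\infty(S_{1/4})}.
\]
All constants below may depend on $M$, and $\delta$ is chosen at the very end.

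\emph{Step 1: $u-v_1$ is small.} The difference $u-v_1$ solves $\Phi^{ij}D_{ij}(u-v_1)=f$ in $S_{1/2}$ and vanishes on $\partial S_{1/2}$. I would invoke the $L^\infty$ estimate of \cite{LN1} for the linearized Monge--Amp\`ere equation with right-hand side in $L^q$, $q>n/2$. For $q=n$ this is the ABP estimate, since $\det\Phi=(\det D^2\phi)^{n-1}\sim 1$. For $n/2<q<n$ it comes from the integrability of the Green function of sections. Either way,
\[
\|u-v_1\|_{L^\infty(S_{1/2})}\le C\Bigl(\mean{S_1}|f|^q\Bigr)^{1/q}\le C\delta .
\]

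\emph{Step 2: closeness of $\phi$ and $w$.} Since $\phi=w=0$ on $\partial S_1$ and $|\det D^2\phi-1|\le\delta$, the comparison principle gives $\|\phi-w\|_{L^\infty(S_1)}\le C\delta^{1/n}$. By Lemma~\ref{lem: c1alpha for hom data}(i), $w$ is smooth and uniformly convex on $S_{3/4}$, and its sections there are comparable to balls. Following the argument in \cite{GN1}, which combines Caffarelli's $W^{2,p}$ estimates for $\phi$ with the uniform closeness of $\phi$ and $w$, I would deduce that $D^2\phi\to D^2w$ in measure on $S_{3/4}$ as $\delta\to0$. With the $W^{2,p}$ bounds this gives $\|\Phi-W\|_{L^n(S_{3/4})}\to0$.

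\emph{Step 3: $v_1-v$ is small on $S_{1/4}$.} Write
\[
\Phi^{ij}D_{ij}(v_1-v)=(W^{ij}-\Phi^{ij})D_{ij}v .
\]
Estimating this on all of $S_{1/2}$ is not possible, because $D^2v$ is controlled only away from $\partial S_{1/2}$. So I would fix a small $t>0$ and work on $S_{1/2-t}$:
\begin{enumerate}
\item[(a)] The interior $C^{1,1}$ bound of Lemma~\ref{lem: c1alpha for hom data}(ii) gives $|D^2v|\le C(t)M$ on $S_{1/2-t}$.
\item[(b)] ABP on $S_{1/2-t}$ then bounds $\|v_1-v\|_{L^\infty(S_{1/2-t})}$ by
\[
\sup_{\partial S_{1/2-t}}|v_1-v|+C(t)M\,\|\Phi-W\|_{L^n(S_{3/4})}.
\]
\item[(c)] For the boundary term, use the global H\"older estimates for the linearized Monge--Amp\`ere equation (Caffarelli--Guti\'errez interior Harnack plus boundary barriers, as in \cite{Le1, GN1}), applied separately to $v_1$ and $v$. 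Both take the boundary datum $u|_{\partial S_{1/2}}$, which is H\"older with norm depending on $M$ by the interior H\"older estimate for $u$ (using $\|f\|_{L^q}\le\delta\le1$). Hence $|v_1-v|\le\omega(t)$ on $\partial S_{1/2-t}$, with $\omega(t)\to0$ uniformly.
\end{enumerate}

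\emph{Choice of parameters.} First pick $t$ with $\omega(t)<\varepsilon/3$. Then pick $\delta$ so small that $C\delta<\varepsilon/3$ and $C(t)M\,\|\Phi-W\|_{L^n}<\varepsilon/3$.

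The main obstacle is the boundary layer in Step 3(c). We need a modulus of continuity for $v_1$ and $v$ up to $\partial S_{1/2}$ that is uniform in $\delta$, and $\partial S_{1/2}$ is only a convex level set of $\phi$, respectively of $w$ for the problem defining $v$. I would first try to handle this by uniform convexity of the sections $S_{1/2}$, which is a consequence of Lemmas~\ref{lem: geo of sections}--\ref{lem: geo-2} once $\delta$ is small, together with explicit quadratic barriers. If that fails, the fallback is a compactness-by-contradiction argument:
\begin{enumerate}
\item[(1)] take sequences with $\delta_k\to0$;
\item[(2)] extract uniform H\"older limits of $\phi_k$ and of the corresponding $u_k$, $v_k$;
\item[(3)] pass to the limit in the viscosity sense to get that $\lim u_k$ and $\lim v_k$ both solve the same $W$-equation with the same boundary data;
\item[(4)] conclude by uniqueness.
\end{enumerate}
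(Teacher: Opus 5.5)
Your outline is correct and is essentially the argument the paper compresses into its citation of \cite[Lemma 4.2]{GN1} and \cite{LN1}. The $L^q$ maximum principle for sections removes $f$, which is exactly what replaces the ABP step of the $q=n$ case, and the two homogeneous solutions are then compared by ABP on $S_{1/2-t}$ using $\|\Phi-W\|_{L^n}\to0$ and the interior $C^{1,1}$ bound for $v$. The boundary layer you flag is easier than you fear, and it needs neither Lemmas~\ref{lem: geo of sections}--\ref{lem: geo-2} (which concern small heights and give only closeness to ellipsoids, not uniform convexity of $S_{1/2}$) nor the compactness fallback. By your Step~1, $v_1$ is within $C\delta$ of $u$, and $u$ is uniformly H\"older near $\partial S_{1/2}$. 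Meanwhile $v$ solves an equation that is uniformly elliptic on the convex set $S_{1/2}$, so the exterior cone condition gives its boundary modulus uniformly in $\delta$.
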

	\begin{proof}
		This is an extension of \cite[Lemma 4.2]{GN1} from $q=n$ to $q\in (n/2,n]$. One can use \cite[Lemma 2.5]{LN1} and \cite[Lemma 2.6]{LN1} to obtain the estimates on $\Vert u-v\Vert_{L^{\infty}(S_{1/4})}$ by taking $\delta$ sufficiently small.
	\end{proof}

	\section{The gradient potential estimates}\label{sec: gradient potential est}
	
	For the rest of the paper, we consider the excess functional \[E_p(S):=\left(\mean{S}|Du-(Du)_S|^p\,\mathrm{d}x\right)^{1/p},\] where \[(Du)_S:=\mean{S} Du(x)\,\mathrm{d}x,\] and $S\subset\Omega$ denotes a section with positive height and $p\ge 1$, and $u$ is a solution of \eqref{eq: LMA equation}. We have the following crucial decay estimate: 
	\begin{lem}\label{lem: decay-est}
		Let $\phi\in C^2(\Omega)$ be a strictly convex function in a normalized convex domain $\Omega\subset\mathbb{R}^n$ with $S_1:=S_{\phi}(0,1)\Subset\Omega$, where $0$ is the minimum point of $\phi$, and $f\in L^q(\Omega)$ with $\frac{n}{2}<q\le n$. Let $u$ be a solution of \eqref{eq: LMA equation}. Given $p$ with $n<p<\frac{nq}{n-q}$ if $q<n$, and $n<p<\infty$ if $q=n$, and $c_0\in (0,1)$, there exist $\delta,\sigma\in (0,1)$, both depending only on $n$, $p$, $q$ and $c_0$ such that if 
		\[E_p(S_1)\le 1,\quad \left(\mean{S_1}|f(x)|^q\,\mathrm{d}x\right)^{1/q}\le \delta\]
		and \[ \begin{cases}
		1-\delta\le \det D^2\phi\le 1+\delta, & \text{in }S_1,\\
		\phi=0 & \text{on }\partial S_1,
		\end{cases}\] then \[E_p(S_{\sigma})\le c_0,\] where $S_{\sigma}:=S_{\phi}(0,\sigma)$.
	\end{lem}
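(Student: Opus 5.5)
We argue by compactness and approximation. Subtracting an affine function does not change the equation, $E_p$, or $f$, so we may replace $u$ by $\tilde u:=u-(Du)_{S_1}\cdot x-\big(u-(Du)_{S_1}\cdot x\big)_{S_1}$. Then $D\tilde u=Du-(Du)_{S_1}$, so $\|D\tilde u\|_{L^p(S_1)}\le C$ and $\tilde u$ has zero mean on $S_1$. Since $S_1$ is normalized and $p>n$, Morrey/Poincaré gives $\|\tilde u\|_{L^\infty(S_1)}\le M$ with $M=M(n,p)$. We drop the tilde.

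Next we compare $u$ with $v$. Let $w$ and $v$ be as in Lemma~\ref{lem: approx}. For a small $\varepsilon>0$ to be fixed, choose $\delta\le\delta(\varepsilon,n,q,M)$, so that $\|u-v\|_{L^\infty(S_{1/4})}<\varepsilon$. By Lemma~\ref{lem: c1alpha for hom data}, $w$ is uniformly convex away from $\partial S_1$, so $W^{ij}D_{ij}$ is uniformly elliptic there with constants depending only on $n$. The maximum principle gives $\|v\|_{L^\infty}\le M$, and hence $\|D^2v\|_{L^\infty(S_{1/8})}\le C(n)M$; here Lemma~\ref{lem: geo of sections} is used to compare small sections with balls. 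Let $\ell(x):=v(0)+Dv(0)\cdot x$. For $\sigma$ small, Lemma~\ref{lem: geo of sections} gives $S_{2\sigma}\subset B_{C\sqrt\sigma}$, so
\[\|v-\ell\|_{L^\infty(S_{2\sigma})}\le CM\sigma,\qquad \|Dv-Dv(0)\|_{L^\infty(S_{2\sigma})}\le CM\sqrt\sigma .\]

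To pass from $L^\infty$ closeness of $u$ and $v$ to gradient closeness, set $z:=u-\ell$. Then $z$ solves \eqref{eq: LMA equation} with the same $f$, and on $S_{2\sigma}$,
\[|z|\le |u-v|+|v-\ell|\le \varepsilon+CM\sigma .\]
We rescale $S_{2\sigma}$ to a normalized section: $\tilde z(y):=z(T^{-1}y)/(2\sigma)$ with $\tilde\phi(y)=\phi(T^{-1}y)/(2\sigma)$. This uses Lemma~\ref{lem: geo of sections} with $h=2\sigma$, where $T$ is nearly $(2\sigma)^{-1/2}$ times a unimodular map, and requires $\delta\le\tau_0\sigma^2$. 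The rescaled pair is again within $\delta$ of $\det=1$, and $\tilde f$ satisfies $\big(\mean{}|\tilde f|^q\big)^{1/q}\le\delta$ (the scaling keeps $f$ unchanged in average, since $\det$ is unimodular up to $\sigma$-powers). Apply Lemma~\ref{lem: W1p-est} to $\tilde z$ and scale back. Since $Dz=\sqrt{2\sigma}^{\,-1}$ times the rescaled gradient up to bounded linear factors,
\[\Big(\mean{S_\sigma}|Du-Dv(0)|^p\Big)^{1/p}\le \frac{C}{\sqrt\sigma}\,(\varepsilon+CM\sigma)+C\sqrt\sigma\,\delta .\]
Finally $E_p(S_\sigma)\le 2\big(\mean{S_\sigma}|Du-Dv(0)|^p\big)^{1/p}$.

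The constants are chosen in order. First pick $\sigma$ with $2CM\sqrt\sigma\le c_0/4$. Then pick $\varepsilon=c_0\sqrt\sigma/(8C)$. Finally pick $\delta$ smaller than $\delta(\varepsilon)$, $\tau_0\sigma^2$, the $\delta$ of Lemma~\ref{lem: W1p-est}, and $c_0/(8C)$. This gives $E_p(S_\sigma)\le c_0$.

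The main obstacle is the rescaling step. One must check that Lemma~\ref{lem: geo of sections} makes $T S_{2\sigma}$ normalized (it is a near-ball of radius $\sqrt2$ after the $h^{-1/2}$ factor), with the section $S_\sigma$ corresponding to the half-section. One must also verify that the affine normalization keeps both the determinant pinching and the $L^q$ smallness of $f$ at the same scale. The extra powers of $\sigma$ these produce are harmless, because $\delta$ is chosen last.
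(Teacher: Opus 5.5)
Your proposal is correct and follows the paper's proof essentially step for step: the affine normalization with Morrey's inequality, the comparison function $v$ from Lemma~\ref{lem: approx}, the $C^{1,1}$ bound giving $\|v-\ell\|_{L^\infty(S_{2\sigma})}\le CM\sigma$, the rescaled $W^{1,p}$ estimate applied to $u-\ell$ on $S_{2\sigma}$, and the choice of $\sigma$, then $\varepsilon$, then $\delta$.

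There is one bookkeeping correction, in the $f$-term. The $L^q$-average of $f$ over $S_{2\sigma}$ is not bounded by $\delta$. Since $|S_1|/|S_{2\sigma}|\sim\sigma^{-n/2}$, it is only bounded by $C\sigma^{-n/(2q)}\delta$. So the last term is $C\sigma^{\frac12-\frac{n}{2q}}\delta$, as in the paper, and you need $\delta\le c_0\sigma^{\frac{n}{2q}-\frac12}/(8C)$ rather than $\delta\le c_0/(8C)$. As you anticipated, this is harmless because $\delta$ is chosen last.

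A smaller slip: with your normalization $\tilde z=z/(2\sigma)$, $Dz$ is $\sqrt{2\sigma}$ times the rescaled gradient (up to bounded linear factors), not $1/\sqrt{2\sigma}$ times it. Your displayed bound is nonetheless the right one.
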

	\begin{proof}
		We assume that $(u)_{S_1}=0$ and $(Du)_{S_1}=0$. Otherwise, we consider 
		\[\tilde{u}=u-(u)_{S_1}-(Du)_{S_1}\cdot x,\] which solves the same equation as $u$. Since $p>n$, Morrey's inequality gives \[\Vert u\Vert_{L^{\infty}(S_1)}\le C\left(\mean{S_1}|Du|^p\,\mathrm{d}x\right)^{1/p}=CE_p(S_1)\le C=:M.\]
		Let $w$ be the solution of 
		\[\begin{cases}
			\det D^2w=1 & \text{in }S_1,\\
			w=0& \text{on }\partial S_1.
			\end{cases}\]
		Then Lemma \ref{lem: approx} implies that for any $\varepsilon\in(0,M)$, there exist $\delta_1=\delta_1(\varepsilon,n,q)\in (0,1)$ and $v$ solving \[\begin{cases}
			W^{ij}D_{ij}v=0 & \text{in }S_{1/2},\\
			v=u & \text{on } \partial S_{1/2},
		\end{cases}\] 
		 such that if \[\begin{cases}
		1-\delta_1\le \det D^2\phi\le 1+\delta_1 &\text{in }S_1,\\
		\phi=0 & \text{on }\partial S_1,
		\end{cases}\] 
		and 
		\[\left(\mean{S_1}|f(x)|^q\right)^{1/q}\le \delta_1,\] then \[\Vert u-v\Vert_{L^{\infty}(S_{1/4})}<\varepsilon.\] 
		Then \[\Vert v\Vert_{L^{\infty}(S_{1/4})}\le \Vert u\Vert_{L^{\infty}(S_1)}+\Vert u-v\Vert_{L^{\infty}(S_{1/4})}\le M+\varepsilon\le 2M.\] Applying the $C^{1,1}$-regularity estimate in Lemma \ref{lem: c1alpha for hom data}, we have \[\Vert v\Vert_{C^{1,1}(S_{1/8})}\le C\Vert v\Vert_{L^{\infty}(S_{1/4})}\le CM.\] It follows that there exists an affine function $l=v(0)+Dv(0)\cdot x$ such that \[\Vert v-l\Vert_{L^{\infty}(B_r(0))}\le CMr^{ 2 }\]
		for all small $r\le r_0$ such that $B_{r_0}(0)\subset S_{1/8}$.	
		By Lemma \ref{lem: geo of sections} we choose 
		\[\delta_2=\delta_2(\sigma)\le \tau_0 \sigma^2\] 
		such that $S_{2\sigma}\subset B_{c\sqrt{\sigma}}$ for all $\sigma\le\sigma_0$. Then for all $\sigma\le\sigma_0$ with  $c\sqrt{\sigma_0}= r_0$, we have \[\Vert v-l\Vert_{L^{\infty}(S_{2\sigma})}\le\Vert v-l\Vert_{L^{\infty}(B_{c\sqrt{\sigma}})}\le  CM\sigma.\]
		
		We claim that there exists $\delta_3\le \tau_0(2\sigma)^2$ such that if 
		\[|\det D^2\phi-1|<\delta_3  \text{\ in\ } S_1\] 
		and $\phi=0$ on $\partial S_1$, then \begin{equation}\label{eq: claim-1}
			\left(\mean{S_{\sigma}}|Du|^p\,\mathrm{d}x\right)^{1/p}\le \frac{C}{\sqrt{\sigma}}\Vert u\Vert_{L^{\infty}(S_{2\sigma})}+C\sqrt{\sigma}\left(\mean{S_{2\sigma}}|f|^q\,\mathrm{d}x\right)^{1/q},
		\end{equation} where $C=C(n,p,q)>0$. 
		Indeed, we consider \[\tilde{\phi}(y):=\frac{1}{2\sigma}\left[\phi((2\sigma)^{1/2}A^{-1}y)-\phi(0)-2\sigma\right] \]
for all $y\in (2\sigma)^{-1/2}AS_{2\sigma}(\phi)=S_1(\tilde{\phi})$, where $A$ is from Lemma \ref{lem: geo of sections} satisfying $\det A=1$, $\Vert A^{-1}\Vert \le c_1^{-1/2}$ and $\Vert A\Vert\le c_2^{1/2}$ for two universal constants. Denote 
\[\tilde{u}(y)=u((2\sigma)^{1/2}A^{-1}y)\quad \text{and}\quad \tilde{f}(y)=2\sigma f((2\sigma)^{1/2}A^{-1}y).\] 
Applying Lemma \ref{lem: W1p-est} to $\tilde{u}$ in $S_{1}(\tilde{\phi})$, we obtain that \[\left(\mean{S_{1/2}(\tilde{\phi})}|D\tilde{u}|^p\,\mathrm{d}y\right)^{1/p}\le C\Vert \tilde{u}\Vert_{L^{\infty}(S_1(\tilde{\phi}))}+C \left(\mean{S_1(\tilde{\phi})}|\tilde{f}|^{q}\,\mathrm{d}y\right)^{1/q}.\]	Then we scale it back to obtain \eqref{eq: claim-1}.
		
		Since $u-l$ solves the same equation as $u$, we may apply \eqref{eq: claim-1} to $u-l$ to obtain \[\left(\mean{S_{\sigma}}|D(u-l)|^p\,\mathrm{d}x\right)^{1/p}\le \frac{C}{\sqrt{\sigma}}\Vert u-l\Vert_{L^{\infty}(S_{2\sigma})}+C\sqrt{\sigma}\left(\mean{S_{2\sigma}}|f|^{q}\,\mathrm{d}x\right)^{1/q}.\] Then \begin{align*}
		E_p(S_{\sigma})&=\left(\mean{S_{\sigma}}|Du-(Du)_{S_{\sigma}}|^p\,\mathrm{d}x\right)^{1/p}\le 2\left(\mean{S_{\sigma}}|D(u-l)|^p\,\mathrm{d}x\right)^{1/p}\\
		&\le \frac{C}{\sqrt{\sigma}}\left(\Vert u-v\Vert_{L^{\infty}(S_{2\sigma})}+\Vert v-l\Vert_{L^{\infty}(S_{2\sigma})}\right)+C\sigma^{\frac{1}{2}-\frac{n}{2q}}\left(\mean{S_1}|f|^{q}\,\mathrm{d}x\right)^{1/q}\\
		&\le C\left(\varepsilon\sigma^{-1/2}+M\sigma^{1/2}+\sigma^{\frac{q-n}{2q}}\delta\right).
		\end{align*}
		 We first choose $\sigma$ sufficiently small such that $CM\sigma^{1/2}\le c_0/3$, then choose $\varepsilon$ sufficiently  small such that $C\varepsilon\sigma^{-1/2}\le c_0/3$. Finally we choose $\delta\le \min\{\delta_1,\delta_2,\delta_3\}$ small enough such that $C\delta\sigma^{\frac{q-n}{2q}}\le c_0/3$. In conclusion, we obtain $E_p(S_{\sigma}) \le c_0$.
	\end{proof}
	\begin{thm}\label{thm: Du potential est}
		Let $\Omega$ be a normalized convex domain in $\mathbb{R}^n$. Let $\phi\in C^2(\Omega)$ be a strictly convex function in $\Omega$ with $\phi=0$ on $\partial\Omega$. Let $x_0$ be the minimum point of $\phi$ and a Lebesgue point of $Du$. Let $f\in L^q(\Omega)$ with $\frac{n}{2}<q\le n$ and let $u$ be a solution of \eqref{eq: LMA equation}. Choose $p$ so that $n<p<\frac{nq}{n-q}$ if $q<n$, and $n<p<\infty$ if $q=n$. Then for any $0<\theta<1$ and $\frac{1}{2}<\alpha<1$, there exist $C>0$, $\delta\in (0,1)$ and $h_0>0$, all depending only on $n$, $\alpha$, $\theta$, $p$ and $q$ such that if 
		\[1-\delta\le \det D^2\phi\le 1+\delta \ \ \text{in\ }  \Omega,\]
		then \begin{eqnarray}
			|Du(x_0)|&\le& C\mean{S_{\phi}(x_0,h)}|Du(x)|\,\mathrm{d}x+ C\left(\mean{S_{\phi}(x_0,h^{\theta})}|Du(x)|^p\,\mathrm{d}x\right)^{1/p}\nonumber\\[5pt]
			&&+C\mathbf{I}_{\phi,q}^{f,\alpha}(x_0,h^{\theta})\label{eq: Du potential-est}
		\end{eqnarray}
		whenever $S_{\phi}(x_0,h^{\theta})\Subset\Omega$ with $h\le h_0$.
	\end{thm}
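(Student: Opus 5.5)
The plan is the iteration scheme of Duzaar--Kristensen--Mingione, with Lemma \ref{lem: decay-est} as the one-step improvement. Set $h_j:=\sigma^j h^{\theta}$ and $S_j:=S_\phi(x_0,h_j)$, where $\sigma$ is the constant of Lemma \ref{lem: decay-est} for a suitable small $c_0$ fixed below. Since $x_0$ is a Lebesgue point, $(Du)_{S_j}\to Du(x_0)$; sections shrink to $x_0$ with comparable measure, $|S_j|\sim h_j^{n/2}$. Telescoping gives
\[|Du(x_0)|\le |(Du)_{S_{J}}|+\sum_{j\ge J}|(Du)_{S_{j+1}}-(Du)_{S_j}|\le |(Du)_{S_J}|+C\sum_{j\ge J}E_1(S_j),\]
using $|(Du)_{S_{j+1}}-(Du)_{S_j}|\le \frac{|S_j|}{|S_{j+1}|}E_1(S_j)\le C\sigma^{-n/2}E_p(S_j)$. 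I choose $J$ so that $h_J\approx h$. Then $|(Du)_{S_J}|$ is controlled by the first term in \eqref{eq: Du potential-est}, and everything reduces to bounding $\sum_{j\ge J}E_p(S_j)$.

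The key step is a rescaled decay estimate. Normalize $S_j$ as in the proof of Lemma \ref{lem: decay-est}: the rescaling $\tilde\phi$ and $\tilde u(y)=u((h_j)^{1/2}A_j^{-1}y)$. The gradient scales by $h_j^{1/2}$ and $\tilde f=h_jf(\cdot)$, and $\det D^2\tilde\phi$ stays within $\delta$ of $1$. Apply Lemma \ref{lem: decay-est} to $\tilde u/\lambda_j$, where
\[\lambda_j:=E_p(S_j)+\delta^{-1}h_j^{1/2}\Big(\mean{S_j}|f|^q\Big)^{1/q}.\]
This yields
\[E_p(S_{j+1})\le c_0E_p(S_j)+C h_j^{1/2}\Big(\mean{S_j}|f|^q\Big)^{1/q}.\]
The obstacle is the geometry. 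For a nonminimal center, or deep in the iteration, the normalizing map $A_j$ of $S_j$ is not uniformly controlled under only $|\det D^2\phi-1|\le\delta$. Lemma \ref{lem: geo of sections} and Lemma \ref{lem: geo-2} give closeness to balls only for heights $\mu$ with $\varepsilon\le\tau_0\mu^2$. So the iteration cannot run down to scale $0$ with a fixed $\delta$, and the affine distortion accumulates like $\prod(1+C\sigma_k)$.

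To handle this, I run the iteration only over the heights $h\le h_j\le h^\theta$. Here $\delta$ is fixed in terms of $h_0$, so Lemma \ref{lem: geo-2} applies iteratively with errors $C(\sigma\mu^{1/2}+\mu^{-1}\delta^{1/2})$ that sum to a bounded distortion. The price is the loss of scale $\theta$, and this is exactly where $h^\theta$ and the factor $h^{\alpha-1/2}$ enter. Below height $h$ I do not iterate; instead I use the Lebesgue-point assumption together with the averaged term at height $h$. Concretely, I would first attempt to show that $|Du(x_0)-(Du)_{S_\phi(x_0,h)}|$ is bounded by $C\sum_j E_p(S_j)$ plus the tail. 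If the small-scale geometry fails, the fallback is to bound the tail using the $L^p$ term at height $h^\theta$, via Lemma \ref{lem: W1p-est}, Morrey, and the $C^{1,\beta}$-type interior smallness.

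Summing the recursion with $c_0\le 1/2$ gives
\[\sum_j E_p(S_j)\le 2E_p(S_\phi(x_0,h^\theta))+C\sum_j h_j^{1/2}\Big(\mean{S_j}|f|^q\Big)^{1/q}.\]
The first term is at most $2\big(\mean{S_\phi(x_0,h^\theta)}|Du|^p\big)^{1/p}$. For the second, note $h_j^{1/2}=h_j^{\alpha}h_j^{1/2-\alpha}$. Comparing the sum to the integral $\int\big(\mean{S_\rho}|f|^q\big)^{1/q}\rho^{-\alpha}\,d\rho$ over dyadic-type blocks, with $\mean{S_{\rho}}\le C\,\mean{S_{h_j}}$ for $\rho\in[h_{j+1},h_j]$ by volume comparability, I get
\[h_j^{1/2}\Big(\mean{S_j}|f|^q\Big)^{1/q}\le C h_j^{1/2-\alpha}\cdot\int_{h_{j+1}}^{h_j}\Big(\mean{S_{\phi}(x_0,\rho)}|f|^q\Big)^{1/q}\frac{d\rho}{\rho^{\alpha}}\cdot(\text{shift}).\]
Since $h_j\le h^\theta$ and $\alpha>1/2$, this is bounded by $C\,\mathbf I^{f,\alpha}_{\phi,q}(x_0,h^\theta)$ after absorbing $h_j^{1/2-\alpha}\le$ the block weight. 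I expect the geometry-control step to be the real difficulty, and the summation step to be routine.
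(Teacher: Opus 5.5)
There is a genuine gap, and it sits exactly where you expect the difficulty. Your telescoping bound needs $\sum_{j\ge J}E_p(S_j)$ over \emph{all} heights below $h$. You then restrict the iteration to $h\le h_j\le h^{\theta}$ and say you will not iterate below $h$. Nothing else controls the tail $|Du(x_0)-(Du)_{S_\phi(x_0,h)}|$. The Lebesgue point property is only qualitative. Your fallback also fails: Morrey's inequality controls the oscillation of $u$, not of $Du$, and no $C^{1,\beta}$ bound for $u$ is available when $q\le n$, since $Du$ may be unbounded.

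Second, the recursion $E_p(S_{j+1})\le c_0E_p(S_j)+Ch_j^{1/2}F_j$ is not what the rescaling gives. After normalizing by $x=h_j^{1/2}A_j^{-1}y$, Lemma~\ref{lem: decay-est} controls the twisted excess $\mathcal{E}_j$ of $Du\,A_j^{-1}$ on $S_j$. Passing to the normalization of $S_{j+1}$ costs $\|A_jA_{j+1}^{-1}\|\le c_1^{-1/2}$, so the contraction factor is $\rho=c_0/\sqrt{c_1}$. Comparing with $E_p(S_j)$ costs $\|A_j\|$ and $\|A_j^{-1}\|$.

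Your claim that the errors from Lemma~\ref{lem: geo-2} sum to a bounded distortion is false. The errors $a_j=C(a_{j-1}\sigma^{1/2}+\sigma^{-1}\delta^{1/2})$ tend to a positive limit of order $\sigma^{-1}\delta^{1/2}$. Hence $\prod_i(1+Ca_i)$ grows like $L^j$ with $L=\exp(2C\sqrt{\delta}/\sigma)>1$. Over your $(1-\theta)k$ steps this is a factor $h^{-\gamma}$ for some small $\gamma>0$, which is unbounded as $h\to0$. A telltale sign: if your recursion held with uniform constants, starting it at height $h$ would give the estimate with $\theta=1$. The paper explicitly leaves that case open in Question~\ref{ques:endpoint}.

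The missing idea is that the iteration \emph{does} run down to scale $0$ with a fixed $\delta$, provided you renormalize successively. With $h_j=\sigma^j$, apply Lemma~\ref{lem: geo-2} to the rescaled $\phi_j$ with the fixed height ratio $\mu=\sigma$, and set $A_{j+1}=AA_j$. Each $\phi_j$ again satisfies $1-\delta\le\det D^2\phi_j\le 1+\delta$, so the requirement $\delta\le\tau_0\sigma^2$ is the same at every step.

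One accepts the exponential distortion $\|A_j\|+\|A_j^{-1}\|\le CL^j$ and beats it with the decay. Fix $c_0$ (hence $\sigma$), then take $\delta$ so small that $L\rho<1$, $L^{1+\theta}\rho^{1-\theta}<1$ and $L\sigma^{(1-\theta)(\alpha-\frac12)}\le1$. Let $h_k<h\le h_{k-1}$ and $N=\lfloor\theta k\rfloor$, so $h_N$ is comparable to $h^\theta$. Iterate $\mathcal{E}_{j+1}\le\rho\,\mathcal{E}_j+\rho\,\delta^{-1}h_j^{1/2}F_j$ starting from $N$, and sum $E_i\le CL^i\mathcal{E}_i$ over $i\ge k$. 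This puts the factor $L^{N+k}\rho^{k-N}\le C(L^{1+\theta}\rho^{1-\theta})^k\le C$ in front of $E_N$; that is the real source of the $\theta$-loss.

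Likewise, $\alpha>\frac12$ is what absorbs $L^{j}$ in the $f$-terms, through $h_j^{1/2}=\sigma^{(j-N)(\alpha-\frac12)}h_N^{\alpha-\frac12}h_j^{1-\alpha}$. In your summation neither $\theta<1$ nor $\alpha>\frac12$ does any work. (Also, the factor $h_j^{1/2-\alpha}$ you write should be $h_j^{\alpha-1/2}$.) This reflects that the distortion has not been accounted for.
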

	\begin{proof}
		For convenience, we assume that the minimum point of $\phi$ is $x_0=0$. 
		Let $\delta $ and $\sigma$ be as in Lemma \ref{lem: decay-est}.
		
		Now we claim that there exists a sequence of positive definite matrices $\{A_j\}_{j=0}^{\infty}$ with $A_0=I$ and $\det A_j=1$ such that for all $j=1,2,3,\cdots$,
		\begin{eqnarray*}
		&&\Vert A_{j-1}A_j^{-1}\Vert\le \frac{1}{\sqrt{c_1}},\\ 
		&&\Vert A_j\Vert\le \sqrt{c_2(1+Ca_0)(1+Ca_1)\cdots (1+Ca_{j-1})},\\
		&&\Vert A_j^{-1}\Vert\le 1/\sqrt{c_1(1-Ca_1)\cdots (1-Ca_{j-1})},\\
		&&B_{(1-a_j)\sqrt{2}}(0)\subset \sigma^{-\frac{j}{2}}A_jS_{\sigma^j}(\phi)\subset B_{(1+a_j)\sqrt{2}}(0),
		\end{eqnarray*}
		where $C,c_1,c_2$ are the same constants as in Lemma \ref{lem: geo of sections} and the sequence $\{a_j\}_{j=0}^{\infty}$ is defined by
		\begin{align*}
			& a_0:=0, \quad a_1:=C\left(\sigma^{1/2}+\sigma^{-1}\delta^{1/2}\right),\\
			&a_j:=C\left(a_{j-1}\sigma^{1/2}+\sigma^{-1}\delta^{1/2}\right)\quad \text{for all }j\ge 2.
		\end{align*} The claim above is proved by induction similarly to \cite[Theorem 4.5]{GN1}. For convenience, we indicate the proof for the cases $j=1,2$.

\vskip 8pt
	
\underline{$j=1$}: Applying Lemma \ref{lem: geo of sections}, we obtain a positive definite matrix $M=A^tA$ with $\det M=1$, $c_1I\le M\le c_2I$ such that if we take $A_1:=A$ then 
\[B_{(1-a_1)\sqrt{2}}(0)\subset \sigma^{-\frac{1}{2}}A_1S_{\sigma}(\phi)\subset B_{(1+a_1)\sqrt{2}}(0)\]
with 
\[a_1:=C(\sigma^{1/2}+\sigma^{-1}\delta^{1/2})<1-\frac{6}{5\sqrt{2}}<\frac{1}{4}.\] Since $\det M=1$, $c_1I\le M\le c_2I$, we have $\Vert A_1^{-1}\Vert\le 1/\sqrt{c_1}$ and $\Vert A_1\Vert\le \sqrt{c_2}$. 
\vskip 8pt

\underline{$j=2$}: Let $\Omega_1:=\sigma^{-\frac{1}{2}}A_1S_{\sigma}(\phi)$ and \[\phi_1(y):=\frac{1}{\sigma}\left(\phi(\sigma^{\frac{1}{2}}A_1^{-1}y)-\phi(0)-\sigma\right), \quad \forall y\in \Omega_1.\] Then we have $\Omega_1=S_1(\phi_1)$ and  
\[
			\begin{cases}
				1-\delta\le \det D^2\phi_1\le 1+\delta & \text{in }\Omega_1\\
				\phi_1=0 & \text{on }\partial\Omega_1.
			\end{cases}\] Applying Lemma \ref{lem: geo-2} for $\phi_1$ and $\Omega_1$, we obtain a positive definite matrix $M=A^tA$ satisfying $\det M=1$ and $(1-Ca_1)I\le M\le (1+Ca_1)I$, such that 
\[B_{(1-a_2)\sqrt{2}}(0)\subset \sigma^{-\frac{1}{2}}A_1S_{\sigma}(\phi_1)\subset B_{(1+a_2)\sqrt{2}}(0),\]
with 
\[a_2:=C(a_1\sigma^{1/2}+\sigma^{-1}\delta^{1/2}).\] 
Define  $A_2=AA_1$. Then $\det A_2=1$. As $S_{\sigma}(\phi_1)=\sigma^{-\frac{1}{2}}A_1S_{\sigma^2}(\phi)$, we conclude that \[B_{(1-a_2)\sqrt{2}}(0)\subset \sigma^{-1}A_2S_{\sigma^2}(\phi)\subset B_{(1+a_2)\sqrt{2}}(0).\] 
Since $(1-Ca_1)I\le M\le (1+Ca_1)I$ and $A_1A_2^{-1}=A^{-1}$, we have 
\[\Vert A_1A_2^{-1}\Vert =\Vert A^{-1}\Vert\le \frac{1}{\sqrt{1-Ca_1}}\le \frac{1}{\sqrt{c_1}}.\] 
By $A^tA=M$, we have 
\[(1-Ca_1)|x|^2\le |Ax|^2\le (1+Ca_1)|x|^2.\] 
Then \[|A_2x|^2=|AA_1x|^2\le (1+Ca_1)|A_1x|^2\le c_2(1+Ca_1)|x|^2,\] which yields $\Vert A_2\Vert\le \sqrt{c_2(1+Ca_1)}$. 
		Similarly, we have $\Vert A^{-1}_2\Vert\le \frac{1}{\sqrt{c_1(1-Ca_1)}}$.
		
		Now we estimate $\Vert A_j\Vert$ and $\Vert A_j^{-1}\Vert$. Choose $\sigma$ so small that $C\sqrt\sigma<1/2$ and then choose $\delta$ so small that $Ca_j<1/2$ for all $j$. The recursion for $a_j$ gives, for all $j\ge1$, \[a_j\le (C\sqrt{\sigma})^j+\frac{2C\sqrt{\delta}}{\sigma}.\]
		It follows that for all $j\ge 2$, 
		\begin{align*}
			\prod_{i=1}^{j-1}(1+Ca_i)&=\exp\left(\sum_{i=1}^{j-1}\log (1+Ca_i)\right)\le \exp\left(C\sum_{i=1}^{j-1}a_i\right)\\
			&\le  \exp\left(C\sum_{i=1}^{j-1} (C\sqrt{\sigma})^i+\frac{2(j-1)C\sqrt{\delta}}{\sigma}\right)\le C\exp\left(\frac{2jC\sqrt{\delta}}{\sigma}\right).
		\end{align*}
		Then we have
		 \begin{equation*}
		\Vert A_j\Vert \le \left[c_2\prod_{i=1}^{j-1}(1+C a_i)\right]^{\frac{1}{2}} \le  
		C\exp\left(\frac{jC\sqrt{\delta}}{\sigma}\right),
		\end{equation*}
		while $\Vert A_j^{-1}\Vert$ satisfies a similar estimate. So we have \begin{equation}\label{eq: A_k-est}
			\Vert A_j\Vert+\Vert A_j^{-1}\Vert\le CL^j,
		\end{equation} where \[L=\exp(2C\sqrt{\delta}/\sigma).\]
		

		Now we prove \eqref{eq: Du potential-est}. For convenience, we denote $h_j:=\sigma^j$ and $S_j:=S_{h_j}(\phi)$ for all $j\ge 1$. Let $\Omega_j:=h_j^{-1/2}A_jS_{j}$. Consider
		 \[\phi_j(y):=\frac{1}{h_j}\left(\phi(h_j^{1/2}A_j^{-1}y)-\phi(0)-h_j\right), \quad u_j(y):=\frac{u(h_j^{1/2}A_j^{-1}y)}{M_j},\quad \forall y\in \Omega_j,\] 
		where 
		\begin{eqnarray*}
		M_j&:=&h_j^{1/2} \left(\mean{S_{j}}|Du(x)A_j^{-1}-(Du\,A_j^{-1})_{S_{j}(\phi)}|^p\,\mathrm{d}x\right)^{1/p}\\
		&&+\frac{h_j}{\delta}\left(\mean{S_{j}}|f(x)|^q\,\mathrm{d}x\right)^{1/q}+\varepsilon_0,\end{eqnarray*} 
		for  a small parameter $\varepsilon_0$. Then for $\Omega_j=S_{1}(\phi_j)$, we have
		\[\begin{cases}
				1-\delta\le \det D^2\phi_j\le 1+\delta & \text{in }\Omega_j,\\
				\phi_j=0 & \text{on }\partial\Omega_j,
			\end{cases}
		\] and \[\operatorname{tr}(\Phi_j D^2u_j)=\frac{h_jf(h_j^{1/2}A_j^{-1}y)}{M_j}=:f_j(y)\]
		in $\Omega_j$, where $\Phi_j:=\det D^2\phi_j(y)(D^2\phi_j)^{-1}(y)$.
		
		 Noting that 
		 \begin{align*}
		E_p(S_1(\phi_j))&:=\left(\mean{S_1(\phi_j)}|Du_j-(Du_j)_{S_1(\phi_j)}|^p\,\mathrm{d}y\right)^{1/p}\\
		&=\frac{h_j^{1/2}}{M_j}\left(\mean{S_j}|Du(x)A_j^{-1}-(Du\,A_j^{-1})_{S_{j}}|^p\,\mathrm{d}x\right)^{1/p}\le 1
		\end{align*}and \[	\left(\mean{S_1(\phi_j)}|f_j(y)|^q\,\mathrm{d}y\right)^{1/q}=\frac{h_j}{M_j}\left(\mean{S_{j}}|f(x)|^q\,\mathrm{d}x\right)^{1/q}\le \delta,\]
		we can apply Lemma \ref{lem: decay-est} to obtain
		\begin{align*}
		c_0&\ge 	E_p(S_{\sigma}(\phi_j)):=\left(\mean{S_{\sigma}(\phi_j)}|Du_j-(Du_j)_{S_{\sigma}(\phi_j)}|^p\,\mathrm{d}y\right)^{1/p}\\
			&= \frac{h_j^{1/2}}{ M_j}\left(\mean{S_{j+1}(\phi)}|Du(x) A_j^{-1}-(Du\,A_j^{-1})_{S_{j+1}(\phi)}|^p\,\mathrm{d}x\right)^{1/p}.
		\end{align*}
		For convenience, we write \[\mathcal{E}_j:=\left(\mean{S_{j}}|Du(x)A_j^{-1}-(Du\,A_j^{-1})_{S_{j}}|^p\,\mathrm{d}x\right)^{1/p}.\]
		Letting  $\varepsilon_0\to0$, we have \begin{align}
			\mathcal{E}_{j+1}&\le \Vert A_jA_{j+1}^{-1}\Vert \left(\mean{S_{{j+1}}}|Du(x) A_j^{-1}-(Du\,A_j^{-1})_{S_{j+1}}|^p\,\mathrm{d}x\right)^{1/p} \nonumber\\
			&\le \frac{c_0}{\sqrt{c_1}}h_j^{-1/2}M_j
			= \frac{c_0}{\sqrt{c_1}}\mathcal{E}_j+\frac{c_0\sqrt{h_j}}{\sqrt{c_1}\delta}\left(\mean{S_{j}}|f(x)|^q\,\mathrm{d}x\right)^{1/q}.\label{eq: E_p-est}
		\end{align}
		
		Given $0<h\ll 1$, there exists $k\ge 2$ such that \[h_{k}<h\le h_{k-1},\] so that $S_k\subset S_{\phi}(x_0,h)\subset S_{k-1}$ and the two sections have comparable volume. Given $0<\theta<1$, let \[N=\left \lfloor \theta k \right \rfloor \] and denote \[E_j:=\left(\mean{S_j}|Du-(Du)_{S_j}|^p\,\mathrm{d}x\right)^{1/p},\quad\rho:=\frac{c_0}{\sqrt{c_1}}\quad\text{ and }\quad F_j:=\left(\mean{S_j}|f(x)|^q\,\mathrm{d}x\right)^{1/q}.\] 
        Then \[E_j\le \Vert A_j\Vert\mathcal{E}_j,\quad \quad\mathcal{E}_j\le \Vert A_j^{-1}\Vert E_j.\]
        We choose the constants in the following order. First choose $c_0$ small in Lemma~\ref{lem: decay-est}; this fixes a corresponding $\sigma$. Then shrink $\delta$ so that $L=\exp(2C\sqrt\delta/\sigma)$ is sufficiently close to $1$. With this choice we have
        \begin{equation}\label{eq: choice of sigma and delta}
			L^{1+\theta}\rho^{1-\theta}<1,\quad L\sigma^{(1-\theta)(\alpha-\frac{1}{2})}\le 1,\quad L\rho<1.
		\end{equation} Iterating \eqref{eq: E_p-est} from $N$ to $i$, we have \begin{equation}\label{eq: E_i-est}
			E_i\le CL^i\mathcal{E}_i \le CL^i\left(\rho^{i-N}\mathcal{E}_N+\sum_{j=N}^{i-1}\rho^{i-1-j}h_j^{1/2}F_j\right).
		\end{equation}Then \begin{align*}
		\sum_{i=k}^{\infty}E_i&\le C\sum_{i=k}^{\infty}L^{i+N}\rho^{i-N}E_N+C\sum_{i=k}^{\infty}\sum_{j=N}^{i-1}L^i\rho^{i-1-j}h_j^{1/2}F_j\\
		&\le CL^{N+k}\rho^{k-N}E_N+C\sum_{j=N}^{k-1}L^k\rho^{k-1-j}h_j^{1/2}F_j+C\sum_{j=k}^{\infty}L^{j+1}h_j^{1/2}F_j.
		\end{align*}
		 The choice of $c_0$, $\delta$ and $\sigma$ in \eqref{eq: choice of sigma and delta} ensures that \begin{align*}
		L^{N+k}\rho^{k-N}&\le C(L^{1+\theta}\rho^{1-\theta})^k\le C,\\[5pt]
		L^{k}\rho^{k-1-j}h_j^{1/2}&=L^k\rho^{k-N-1}(\rho^{-1}\sigma^{\alpha-\frac{1}{2}})^{j-N}h_N^{\alpha-\frac{1}{2}}h_j^{1-\alpha}\\
		&\le C\max\{(L\rho^{1-\theta})^k, (L\sigma^{(1-\theta)(\alpha-\frac{1}{2})})^k\} h_N^{\alpha-\frac{1}{2}}h_j^{1-\alpha}\\
		&\le  Ch_{N}^{\alpha-\frac{1}{2}}h_j^{1-\alpha}, \quad\quad\quad\quad  \text{for all }N\le j<k,\\[5pt]
		L^{j+1}h_j^{1/2}&= L^{j+1}\sigma^{(j-N)(\alpha-\frac{1}{2})}h_N^{\alpha-\frac{1}{2}}h_j^{1-\alpha}
		\le C(L\sigma^{(1-\theta)(\alpha-\frac{1}{2})})^j h_N^{\alpha-\frac{1}{2}}h_j^{1-\alpha} \\
		&\le Ch_N^{\alpha-\frac{1}{2}}h_j^{1-\alpha},\quad \quad\quad\quad\quad \text{for all }j\ge k.
		\end{align*} Therefore, \begin{equation}\label{eq: sum-E_i-est}
		\begin{split}
			\sum_{i=k}^{\infty}E_i&\le C E_N+Ch_{N}^{\alpha-\frac{1}{2}}\sum_{j=N}^{\infty}h_j^{1-\alpha}F_j\\
		&\le CE_N+Ch_{N}^{\alpha-\frac{1}{2}}\int_0^{h_{N-1}}\left(\mean{S_{\phi}(x_0,\rho)}|f(x)|^q\,\mathrm{d}x\right)^{1/q}\frac{\mathrm{d}\rho}{\rho^{\alpha}}\\
		&\le C\left(\mean{S_{\phi}(x_0,h^{\theta})}|Du(x)|^p\,\mathrm{d}x\right)^{1/p}+C\mathbf{I}_{\phi,q}^{f,\alpha}(x_0,h^{\theta}).
		\end{split}
		\end{equation}
		Hence, \begin{align*}
			&|Du(x_0)|=\lim\limits_{m\to\infty}|(Du)_{S_{m+1}}|\le\sum_{i=k}^{\infty}|(Du)_{S_{i+1}}-(Du)_{S_i}|+|(Du)_{S_k}|\\
			&\,\,\le \sum_{i=k}^{\infty}\left(\mean{S_{i+1}}|Du-(Du)_{S_i}|^p\,\mathrm{d}x\right)^{1/p}+|(Du)_{S_k}|\\
			&\,\,\le C\sum_{i=k}^{\infty}E_i+C\mean{S_{\phi}(x_0,h)}|Du(x)|\,\mathrm{d}x\\
			&\,\,\le C\mean{S_{\phi}(x_0,h)}|Du(x)|\,\mathrm{d}x+ C\left(\mean{S_{\phi}(x_0,h^{\theta})}|Du(x)|^p\,\mathrm{d}x\right)^{1/p}+C\mathbf{I}_{\phi,q}^{f,\alpha}(x_0,h^{\theta}).
		\end{align*}
	\end{proof}
	
	\begin{proof}[Proof of Theorem \ref{thm: gradient potential est}]
	Let $x_0$ be as in Theorem \ref{thm: gradient potential est} and fix $\varepsilon_0>0$. By the properties of sections \cite[Theorem 3.3.8]{Gu} and $g\in C(\Omega)$, there exist $C_1$, $C_2$, $h_1>0$ depending only on $\operatorname{dist}(\Omega',\partial\Omega)$, $n$, $\lambda$, $\Lambda$, the modulus of continuity of $g$, and $b=b(n,\lambda,\Lambda)>0$ such that 
	\[B(x_0,C_1h_1)\subset S_{\phi}(x_0,h_1)\subset B(x_0,C_2h_1^b)\]
	and
	\[|g(y)-g(x_0)|\le\varepsilon_0, \,\,\forall y\in S_{\phi}(x_0,h_1).\] 
	By John's lemma, there exists affine transformation $T:=A(x-x_0)+y_0$ such that 
	\[B_{1}(0)\subset TS_{\phi}(x_0,h_1)\subset B_n(0),\] 
	which implies \begin{equation}\label{eq: ||A||-est}
		\Vert A\Vert\le Ch_1^{-1},\quad \Vert A^{-1}\Vert\le Ch_1^b.
	\end{equation} By the volume estimates of sections (see \cite[Lemma 5.6(i)]{Le5}), we have\begin{equation}\label{eq: hdet^{2/n}=1}
		C^{-1}\le |\det A|^{\frac{2}{n}}h_1\le C
	\end{equation} for some $C>0$ depending only on $n,\lambda,\Lambda$, which means that $|\det A|\approx h_1^{-n/2}$.
	
	Denote $\widetilde{\Omega}:=TS_{\phi}(x_0,h_1)$ and consider the functions
	\begin{align}
		\tilde{\phi}(y)&:=K\left[\phi(T^{-1}y)-\phi(x_0)-D\phi(x_0)\cdot (T^{-1}y-x_0)-h_1\right],\label{eq: scale phi}\\
		\tilde{u}(y)&:=Kg(x_0)u(T^{-1}y),\label{eq: scale u}
	\end{align} 
for $y\in\widetilde{\Omega}$, where $K=\frac{|\det A|^{2/n}}{g(x_0)^{1/n}}$.
Noting that 
\begin{equation*}
		\det D^2\tilde{\phi}(y)=\frac{g(T^{-1}y)}{g(x_0)}, \quad \text{and }\quad |g(T^{-1}y)-g(x_0)|<\varepsilon_0\,\,\text{in }\widetilde{\Omega},
	\end{equation*} we have \[1-\frac{\varepsilon_0}{\lambda}\le\det D^2\tilde{\phi}(y)\le 1+\frac{\varepsilon_0}{\lambda}.\]
	Meanwhile,  $\tilde{\phi}=0$ on $\partial \widetilde{\Omega}$ and \[\tilde{\Phi}^{ij}D_{ij}\tilde{u}=\Phi^{ij}(T^{-1}y)D_{ij}u(T^{-1}y)=\tilde{f}\] for any $y\in \widetilde{\Omega}=S_{\tilde{\phi}}(y_0,Kh_1)$, where $\tilde{f}$ is defined by $\tilde{f}(y)= f(T^{-1}y)$ for any $y\in \widetilde{\Omega}$. 
	
	Now choose $\varepsilon_0=\lambda\delta$ as in Theorem \ref{thm: Du potential est} and scale back to obtain \begin{align*}
		&|Du(x_0)|\le \frac{\Vert A\Vert}{Kg(x_0)}|D\tilde{u}(y_0)|\le C|D\tilde{u}(y_0)|\\
		&\,\,\le C\mean{S_{\tilde{\phi}}(y_0,h)}|D\tilde{u}(y)|\,\mathrm{d}y+ C\left(\mean{S_{\tilde{\phi}}(y_0,h^{\theta})}|D\tilde{u}(y)|^p\,\mathrm{d}y\right)^{1/p}+C\mathbf{I}_{\tilde{\phi},q}^{\tilde{f},\alpha}(y_0,h^{\theta})\\
		&\,\,\le C \mean{S_{\phi}(x_0,t)}|Du(x)|\,\mathrm{d}x+  C \left(\mean{S_{\phi}(x_0,t^{\theta})}|Du(x)|^p\,\mathrm{d}x\right)^{1/p}+ C\mathbf{I}_{\phi,q}^{f,\alpha}(x_0,ct^{\theta}),
	\end{align*} for all $t\le \min\{K^{-1}h_0, K^{1/\theta-1}h_1^{1/\theta}\}=:h_{\ast}$, where $C>0$ depends only on $n$, $\alpha$, $\theta$, $\lambda$, $\Lambda$, $p$, $q$, $\operatorname{dist}(\Omega',\partial\Omega)$ and the modulus of continuity of $g$, $c>0$ depends only on $n$, $\lambda$, $\Lambda$, $\theta$, $\operatorname{dist}(\Omega',\partial\Omega)$ and the modulus of continuity of $g$.
\end{proof}
		
		\section{Interior gradient continuity via potentials}\label{sec: gradient continuity}
		
		First, we consider the normalized domains.
		\begin{thm}\label{thm: modulus est}
		Let $\phi\in C^2(\Omega)$ be strictly convex in a normalized domain $\Omega\subset\mathbb{R}^n$, let $f\in L^q(\Omega)$ with $\frac{n}{2}<q\le n$, and let $u$ be the solution to \eqref{eq: LMA equation}. Choose $p$ so that $n<p<\frac{nq}{n-q}$ if $q<n$, and $n<p<\infty$ if $q=n$. Then for any $\frac{1}{2}<\alpha<1$, there exist $C>0$ and $\delta\in (0,1)$, both depending only on $n$, $\alpha$, $p$ and $q$, such that if 
		\[1-\delta\le \det D^2\phi\le 1+\delta \text{\ in\ }  \Omega,\]
		and \begin{equation}\label{eq: I-to0}
		\lim\limits_{h\to 0}\sup_{\substack{x\in\Omega'\\S_{\phi}(x,h)\Subset\Omega}}\mathbf{I}_{\phi,q}^{f,\alpha}(x,h)= 0 \,\, \text{for every }\Omega'\Subset\Omega,
	\end{equation}  then $Du$ has a continuous representative. Moreover, given $\theta\in (0,1)$, we have \begin{equation}\label{eq: modulus est}
		|Du(x_1)-Du(x_2)|\le C\Vert Du\Vert_{L^{\infty}(\Omega'')}|x_1-x_2|^{\kappa\beta} +C\max_{x\in \{x_1,x_2\}}\mathbf{I}_{\phi,q}^{f,\alpha}(x,c|x_1-x_2|^{\beta\theta}) 
		\end{equation} for all $x_1,x_2\in\Omega'\Subset\Omega''\Subset\Omega$, where $C>0$ depends only on $n$, $\alpha$, $\theta$, $p$, $q$, $\Omega'$ and $\Omega''$, $c>0$ depends only on $n$, $\alpha$, $p$ and $q$, $\beta>1$ depends only on $n$ and $\Omega'$, and $\kappa>0$ depends only on $n$, $p$, $q$, $\alpha$ and $\theta$.
		\end{thm}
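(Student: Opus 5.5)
The plan is to run the excess-decay iteration from the proof of Theorem~\ref{thm: Du potential est}, but now centered at every point of $\Omega'$, not only at the minimum point of $\phi$. We then compare the section averages $(Du)_{S_\phi(x,h)}$ at two points. Since $1-\delta\le\det D^2\phi\le 1+\delta$ in $\Omega$ and $\Omega$ is normalized, Caffarelli's interior $C^{1,\beta-1}$ regularity of $\phi$ applies. Together with the engulfing property of sections, it gives, for $x\in\Omega'$ and $h$ small,
\[B(x,c h^{1/2+\eta})\subset S_\phi(x,h)\subset B(x,Ch^{1/\beta}),\qquad B(x,c r^{\beta})\subset S_\phi(x, r)\ \text{ or rather }\ S_\phi(x,c r^{\beta})\subset B(x,r),\]
so that the height $h\approx|x_1-x_2|^{\beta}$ gives sections of Euclidean size comparable to $|x_1-x_2|$. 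For each $x\in\Omega'$ we normalize $S_\phi(x,h_1)$ by John's lemma exactly as in the proof of Theorem~\ref{thm: gradient potential est}. The matrices $A_j$ of Theorem~\ref{thm: Du potential est} then exist with $\|A_j\|+\|A_j^{-1}\|\le CL^j$, and the recursion \eqref{eq: E_p-est} holds at every center $x$.

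\emph{Step 1 (continuity).} Summing \eqref{eq: E_i-est} as in \eqref{eq: sum-E_i-est}, for $h_k<h$ we get
\[\sum_{i\ge k}E_i(x)\le C L^{k+N}\rho^{k-N}E_N(x)+C\mathbf{I}_{\phi,q}^{f,\alpha}(x,ch^{\theta}),\]
where $N=\lfloor\theta k\rfloor$. Because $L^{1+\theta}\rho^{1-\theta}<1$, the first term is bounded by $C(L^{1+\theta}\rho^{1-\theta})^{k}\|Du\|_{L^\infty(\Omega'')}\approx h^{\kappa_0}\|Du\|_{L^\infty(\Omega'')}$. Here $\kappa_0=\log(L^{1+\theta}\rho^{1-\theta})/\log\sigma>0$, and $E_N\le 2\|Du\|_{L^\infty(\Omega'')}$; the boundedness of $Du$ follows from Theorem~\ref{thm: Du potential est} together with \eqref{eq: I-to0} and Lemma~\ref{lem: W1p-est}. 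Hence $(Du)_{S_\phi(x,h)}$ is Cauchy as $h\to0$, uniformly in $x\in\Omega'$, by \eqref{eq: I-to0}. By Lemma~\ref{lem: average contin}, each $x\mapsto(Du)_{S_\phi(x,h)}$ is continuous, so the uniform limit is continuous. It coincides with $Du$ at Lebesgue points, which gives the continuous representative, and moreover
\[|Du(x)-(Du)_{S_\phi(x,h)}|\le C h^{\kappa_0}\|Du\|_{L^\infty(\Omega'')}+C\mathbf{I}_{\phi,q}^{f,\alpha}(x,ch^\theta).\]

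\emph{Step 2 (modulus).} Let $r=|x_1-x_2|$ and choose the height $h=c_*r^{\beta}$ (or a fixed power of it), so that by engulfing both $S_\phi(x_1,h)$ and $S_\phi(x_2,h)$ lie in a common section $S_\phi(x_1,Kh)$ of comparable volume. Then
\[|(Du)_{S_\phi(x_1,h)}-(Du)_{S_\phi(x_2,h)}|\le C\,\mean{S_\phi(x_1,Kh)}|Du-(Du)_{S_\phi(x_1,Kh)}|\,\mathrm{d}x\le C E_{k'}(x_1)\]
for the appropriate $k'$, and $E_{k'}(x_1)$ obeys the same bound as in Step~1. Combining this with Step~1 at $x_1$ and at $x_2$ yields \eqref{eq: modulus est} with $\kappa\beta$ coming from $h^{\kappa_0}$; the constant $c$ is adjusted to absorb $K$ and $c_*$.

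\emph{Main obstacle.} The delicate point is the geometric step: relating the Euclidean distance $|x_1-x_2|$ to section heights uniformly for $x\in\Omega'$, so that two sections of height $\approx r^\beta$ are nested in a common section of comparable measure. This is where $\beta$ (from $\phi\in C^{1,\beta-1}$) enters. I would first try to use the inclusion $S_\phi(x,h)\supset B(x,ch^{1/\beta})$, obtained from $|\phi(y)-\phi(x)-D\phi(x)\cdot(y-x)|\le C|y-x|^{\beta}$, together with the engulfing property. If the exponents do not match directly, I would pass to a smaller power of $r$, which only changes $\kappa$. A second, subtler point is to make sure that $L$ and $\rho$ are chosen uniformly in the center, so that $\kappa_0$ does not depend on $x$. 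This holds because all constants in Lemma~\ref{lem: decay-est} are universal.
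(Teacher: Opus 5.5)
Your proposal is correct and is essentially the paper's argument. It combines the uniform Cauchy property of $(Du)_{S_\phi(x,h)}$ from the summed excess decay, continuity of the limit via Lemma~\ref{lem: average contin}, and then engulfing together with the bound $\phi(y)-\phi(x)-D\phi(x)\cdot(y-x)\le C|y-x|^{\beta}$; the only cosmetic difference is that the paper takes the exact height $\tau$ with $x_2\in\partial S_\phi(x_1,\tau)$ and uses $S_\phi(x_1,\tau)\subset S_\phi(x_2,\theta_0\tau)$, while you fix $h\approx|x_1-x_2|^{\beta}$ and enclose both sections in $S_\phi(x_1,Kh)$. Note that the only inclusion you need, and the one the $C^{1,\beta-1}$ bound actually gives, is $B(x,ch^{1/\beta})\subset S_\phi(x,h)$ (the reverse inclusions in your opening display are false in general), and that the case $c_*|x_1-x_2|^{\beta}>h_0$ must be handled separately by the trivial bound $2\Vert Du\Vert_{L^\infty(\Omega'')}\le Ch_0^{-\kappa}|x_1-x_2|^{\kappa\beta}\Vert Du\Vert_{L^\infty(\Omega'')}$, as the paper does.
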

		\begin{proof}
			Let $\Omega'\Subset\Omega''\Subset\Omega$ be open subsets such that $S_{\phi}(x_0,h_0)\Subset\Omega''$ for all $x_0\in\Omega'$. By \eqref{eq: Du potential-est} in Theorem \ref{thm: Du potential est} and a standard covering argument, we obtain \begin{equation}\label{eq: Du-bdd}
			\Vert Du\Vert_{L^{\infty}(\Omega'')}\le M
			\end{equation} for some constant $M>0$ depending on $n$, $p$, $q$, $\operatorname{dist}(\Omega'',\partial\Omega)$, $\sup_{\substack{x\in\Omega'\\ S_{\phi}(x,h_0)\Subset\Omega''}}\mathbf{I}_{\phi,q}^{f,\alpha}(x,h_0)$ and $\Vert Du\Vert_{L^p(\Omega)}$.
		
			We first show that $\{(Du)_{S_{\phi}(x_0,t)}\}_{t\le h_0}$ is a Cauchy family uniformly in $x_0\in\Omega'$. We use the notation from the proof of Theorem \ref{thm: Du potential est}; in this paragraph the decay factor is denoted by $\rho=c_0/\sqrt{c_1}$, while $r,t$ denote section heights. Thus, taking $S_k=S_{\phi}(x_0,\sigma^k)$, $h_k\le h<h_{k-1}$, and $N=\left \lfloor \theta k \right \rfloor$,
			and arguing similarly to \eqref{eq: sum-E_i-est}, for $m>k>0$, we have \begin{align*}
				|(Du)_{S_{m+1}}-(Du)_{S_k}|&\le \sum_{i=k}^{m}|(Du)_{S_{i+1}}-(Du)_{S_i}|\le \sum_{i=k}^{\infty} E_i\\
				&\le C(L^{1+\theta}\rho^{1-\theta})^k E_N+C\mathbf{I}_{\phi,q}^{f,\alpha}(x_0,h^{\theta})\\
				&\le Ch_k^{\kappa} \Vert Du\Vert_{L^{\infty}(\Omega'')} + C\mathbf{I}_{\phi,q}^{f,\alpha}(x_0,h^{\theta}) ,
			\end{align*} where \[\kappa:=-\frac{\log (L^{1+\theta}\rho^{1-\theta})}{\log(1/\sigma)}>0.\] Consequently, for any $0<r<t\le h_0$, \[|(Du)_{S_{\phi}(x_0,r)}-(Du)_{S_{\phi}(x_0,t)}|\le Ct^{\kappa}\Vert Du\Vert_{L^{\infty}(\Omega'')}+C\mathbf{I}_{\phi,q}^{f,\alpha}(x_0,t^{\theta}).\] Using \eqref{eq: I-to0}, the averages converge uniformly in $x_0\in\Omega'$. Denote the uniform limit by \[V(x_0):=\lim\limits_{r\to 0} (Du)_{S_{\phi}(x_0,r)}.\] Then \begin{equation}\label{eq: Du-est}
			|V(x_0)-(Du)_{S_{\phi}(x_0,t)}|\le Ct^{\kappa}\Vert Du\Vert_{L^{\infty}(\Omega'')}+C\mathbf{I}_{\phi,q}^{f,\alpha}(x_0,t^{\theta}).
			\end{equation} 
			 For each fixed $t>0$, Lemma \ref{lem: average contin} implies that \[x_0\mapsto (Du)_{S_{\phi}(x_0,t)}\] is continuous. The convergence in \eqref{eq: Du-est} is uniform, hence $V$ is continuous. Moreover, $V=Du$ at every Lebesgue point of $Du$, and we henceforth use $V$ as the continuous representative of $Du$ in $\Omega'$.
			
				Finally, it remains to estimate the modulus of continuity of $Du$. We first consider $x_1,x_2\in\Omega'$ such that $x_2\in \partial S_{\phi}(x_1,\tau)$ with $\tau\le h_0$. The engulfing property of sections (see \cite[Theorem 5.28]{Le5}) yields $S_{\phi}(x_1,\tau)\subset S_{\phi}(x_2,\theta_0\tau)$ for some $\theta_0=\theta_0(n,p,q,\alpha)>0$. So we have \begin{align*}
					|Du(x_1)-Du(x_2)|&\le |Du(x_1)-(Du)_{S_{\phi}(x_1,\tau)}|+|Du(x_2)-(Du)_{S_{\phi}(x_2,\theta_0\tau)}|\\
					&\quad\quad\quad +|(Du)_{S_{\phi}(x_1,\tau)}-(Du)_{S_{\phi}(x_2,\theta_0\tau)}|=:\text{I}+\text{II}+\text{III}.
				\end{align*} 
				By \eqref{eq: Du-est}, we have \[\text{I}+\text{II}\le  C\tau^{\kappa}\Vert Du\Vert_{L^{\infty}(\Omega'')}+C\mathbf{I}_{\phi,q}^{f,\alpha}(x_1,\tau^{\theta})+C\mathbf{I}_{\phi,q}^{f,\alpha}(x_2,(\theta_0\tau)^{\theta}). \] The third term \begin{align*}
					\text{III}&\le \mean{S_{\phi}(x_1,\tau)}|Du(y)-(Du)_{S_{\phi}(x_2,\theta_0\tau)}|\,\mathrm{d}y\\
					&\le C\mean{S_{\phi}(x_2,\theta_0\tau)}|Du(y)-(Du)_{S_{\phi}(x_2,\theta_0\tau)}|\,\mathrm{d}y\le CE_p(S_{\phi}(x_2,\theta_0\tau)).
				\end{align*}
				Since there exists $k\in\mathbb{N}$ such that $h_{k}< \theta_0\tau\le h_{k-1}$, similarly to \eqref{eq: sum-E_i-est} we obtain \[\text{III}\le C E_k\le C\sum_{i=k}^{\infty} E_i\le C\tau^{\kappa}\Vert Du\Vert_{L^{\infty}(\Omega'')}+C\mathbf{I}_{\phi,q}^{f,\alpha}(x_2, (\theta_0\tau)^{\theta}).\]
				 Therefore, we obtain that \[|Du(x_1)-Du(x_2)|\le C\tau^{\kappa}\Vert Du\Vert_{L^{\infty}(\Omega'')}+C\mathbf{I}_{\phi,q}^{f,\alpha}(x_1, \tau^{\theta})+C\mathbf{I}_{\phi,q}^{f,\alpha}(x_2, c\tau^{\theta}).\]
				By the $C^{1,\alpha}$ estimate for the Monge-Amp\`ere equation (see \cite[Theorem 5.18]{Le5}), we see that \[\tau=\phi(x_2)-\phi(x_1)-D\phi(x_1)\cdot(x_2-x_1)\le C|x_1-x_2|^{\beta},\] for some $\beta=\beta(n, p, q,\alpha)>1$ and $C=C(n,p,q,\alpha,\operatorname{diam}(\Omega'))$. Hence, we have \[|Du(x_1)-Du(x_2)|\le C\Vert Du\Vert_{L^{\infty}(\Omega'')}|x_1-x_2|^{\kappa\beta}+C\max_{x\in\{x_1,x_2\}}\mathbf{I}_{\phi,q}^{f,\alpha}(x, c|x_1-x_2|^{\theta\beta}).\]
				
				 If $x_2\notin S_{\phi}(x_1,\tau)$ for all $\tau\le h_0$, then we have \[h_0 \le \phi(x_2)-\phi(x_1)-D\phi(x_1)\cdot(x_2-x_1)\le C|x_1-x_2|^{\beta}.\] So we have \[|Du(x_1)-Du(x_2)|\le 2\sup_{\Omega''}|Du|\le Ch_0^{-\kappa}|x_1-x_2|^{\kappa\beta}\Vert Du\Vert_{L^{\infty}(\Omega'')}.\]
				Therefore, we obtain \eqref{eq: modulus est}.


		\end{proof}
		
		\begin{proof}[Proof of Theorem \ref{thm: grad-conti}]
			Using the same scaling in \eqref{eq: scale phi} and \eqref{eq: scale u} as in the proof of Theorem \ref{thm: gradient potential est}, one can obtain \begin{align*}
				|Du(x_1)-Du(x_2)|&\le C|D\tilde{u}(y_1)-D\tilde{u}(y_2)|\\
				&\le C\Vert Du\Vert_{L^{\infty}(\Omega'')}|y_1-y_2|^{\kappa\beta}+C\max_{y\in \{y_1,y_2\}}\mathbf{I}_{\tilde{\phi},q}^{\tilde{f},\alpha}(y, \tilde{c}|y_1-y_2|^{\theta\beta})\\
				&\le C\Vert Du\Vert_{L^{\infty}(\Omega'')}|x_1-x_2|^{\kappa\beta}+C\max_{x\in \{x_1,x_2\}}\mathbf{I}_{\phi,q}^{f,\alpha}(x, c|x_1-x_2|^{\theta\beta}),
			\end{align*} where $c>0$ depends only on $n$, $p$, $q$, $\alpha$, $\lambda$, $\Lambda$, $\Omega'$ and the modulus of continuity of $g$, and $C>0$ depends only on $n$, $\lambda$, $\Lambda$, $p$, $q$, $\theta$, $\alpha$, $\Omega$ and  the modulus of continuity of $g$.
		\end{proof}

		\begin{proof}[Proof of Corollary \ref{cor: borderline regularity}]
			We present the argument in a normalized section; the general local statement follows by the affine normalization employed in the proof of Theorem \ref{thm: gradient potential est}. All constants below are local constants; after normalization they are uniform on compact subsets. Fix $\theta\in(0,1)$ and choose $p>n$ within the admissible range. The excess-decay iteration from the proof of Theorem \ref{thm: Du potential est} yields that, for every 
			$\Omega'\Subset\Omega''\Subset\Omega$, there exist constants $C,\kappa>0$, depending only on $n$, $\lambda$, $\Lambda$, and $\Omega''$, such that for all sufficiently small sections $S_{\phi}(x,h)\Subset\Omega''$, 
			\[E_p(S_{\phi}(x,h))\le Ch^{\kappa}\Vert Du\Vert_{L^p(\Omega'')}+C\omega_{\alpha,q}(h^{\theta},\Omega').\]  Since \[\operatorname{mosc}_{S_{\phi}(x,h)} Du\le CE_p(S_{\phi}(x,h)),\]
			it follows immediately that $Du\in VMO_{loc}(\Omega,\phi)$ whenever  
			$\omega_{\alpha,q}(r,\Omega')\to 0$ as $r\to 0$, and $Du\in BMO_{loc}(\Omega,\phi)$ whenever $\omega_{\alpha,q}$ remains locally bounded.

			  Assume now that \[M:=	\sup_{S_{\phi}(x_0,h)\Subset\Omega}h^{\frac{1-\varepsilon}{2}}\left(\mean{S_{\phi}(x_0,h)}|f(y)|^n\,\mathrm{d}y\right)^{1/n}<\infty\] for some $\varepsilon\in (0,1)$. Choose $1/2<\alpha<(1+\varepsilon)/2$. Then for all sufficiently small $h$, \[\mathbf{I}_{\phi,n}^{f,\alpha}(x,h)\le Mh^{\alpha-\frac{1}{2}}\int_0^h\rho^{-\frac{1-\varepsilon}{2}}\frac{\mathrm{d}\rho}{\rho^{\alpha}}\le CMh^{\varepsilon/2}.\] Theorem \ref{thm: modulus est} therefore yields a local H\"older modulus for $Du$, hence $u\in C_{\text{loc}}^{1,\gamma}(\Omega)$ for some $\gamma>0$. Specifically, 
              using the modulus of continuity estimate \eqref{eq: modulus est} in Theorem \ref{thm: modulus est} and  combining it with the estimate (3.4) in \cite{LN1}, we obtain \begin{align*}
                  |Du(y_1)-Du(y_2)|&\le C(\Vert Du\Vert_{L^{p}(\Omega'')}+M)|y_1-y_2|^{\min\{\kappa\beta, \beta\theta\varepsilon/2\}}\\
                  &  \le C(\Vert u\Vert_{L^{\infty}(\Omega)}+M)|y_1-y_2|^{\min\{\kappa\beta, \beta\theta\varepsilon/2\}},
              \end{align*} for any $y_1,y_2\in\Omega'$, which is consistent with the result in \cite{GN1}.
			  
			  Finally, if $f\in L^{s,\infty}(\Omega)$ with $s>n$, then for every $q\le n<s$, by the H\"older's inequality in Lorentz spaces \eqref{property 3: Holder ineq}, we obtain that \[\left(\int_{S_{\phi}(x,h)}|f|^q\,\mathrm{d}x\right)^{1/q}\le C\Vert f\Vert_{L^{s,\infty}(S_{\phi}(x,h))}|S_{\phi}(x,h)|^{\frac{1}{q}-\frac{1}{s}}\le C\Vert f\Vert_{L^{s,\infty}(S_{\phi}(x,h))}h^{\frac{n}{2q}-\frac{n}{2s}}.\] Taking $\alpha=1-\frac{n}{2s}>\frac{1}{2}$ gives a uniform bound for $\omega_{\alpha,q}$ and $Du\in BMO_{\text{loc}}(\Omega,\phi)$.   
              
              If $f\in L^{r}(\Omega)$ for some $r>n$, we set $\varepsilon:=1-n/r>0$. 
              By H\"older's inequality, we have 
              \begin{align*}
			  	h^{\frac{1-\varepsilon}{2}}\left(\mean{S_{\phi}(x,h)}|f(y)|^n\,\mathrm{d}y\right)^{1/n}= Ch^{\frac{n-r}{2r}}\Vert f\Vert_{L^{n}(S_{\phi}(x,h))}\le C\Vert f\Vert_{L^r(\Omega)}<\infty,
			  \end{align*} which yields that $u$ is $C^{1,\gamma}$ locally.
		\end{proof}

        \vspace{1em}
	     \noindent\textbf{Acknowledgments.} The author would like to thank  his PhD supervisor, Prof. Bin Zhou, for his constant encouragement and helpful suggestions. The author would also like to thank Dr. Ling Wang for his interest and helpful comments. The author is also grateful to Prof. Nam Q. Le for his valuable comments and generous suggestions.

	\end{document}